\documentclass[11pt]{amsart}
%
\usepackage{amsmath,amssymb}
\usepackage[dvips]{graphicx}

\usepackage{fullpage}
\usepackage{enumerate}

\def\COMMENT#1{}
\let\COMMENT=\footnote
\def\TASK#1{}

\def\noproof{{\unskip\nobreak\hfill\penalty50\hskip2em\hbox{}\nobreak\hfill%
        $\square$\parfillskip=0pt\finalhyphendemerits=0\par}\goodbreak}
\def\endproof{\noproof\bigskip}
\newdimen\margin   
\def\textno#1&#2\par{%
    \margin=\hsize
    \advance\margin by -4\parindent
           \setbox1=\hbox{\sl#1}%
    \ifdim\wd1 < \margin
       $$\box1\eqno#2$$%
    \else
       \bigbreak
       \hbox to \hsize{\indent$\vcenter{\advance\hsize by -3\parindent
       \sl\noindent#1}\hfil#2$}%
       \bigbreak
    \fi}

\def\eps{\varepsilon}

\def\a{\alpha}

\def\d{\delta}
\def\g{\gamma}
\def\th{\theta}

\def\B{\mathcal{B}}
\def\oB{ \overline{\mathcal{B}} }

\def\dB{\mathcal B _{n,k}}
\def\doB{\overline{\mathcal B} _{n,k}}

\newtheorem{firstthm}{Proposition}
\newtheorem{thm}[firstthm]{Theorem}
\newtheorem{prop}[firstthm]{Proposition}
\newtheorem{lemma}[firstthm]{Lemma}
\newtheorem{cor}[firstthm]{Corollary}

\newtheorem{conj}[firstthm]{Conjecture}
\newtheorem{claim}[firstthm]{Claim}

\begin{document}
\title{A note on perfect matchings in uniform hypergraphs}
\author{Andrew Treglown and Yi Zhao}
\thanks{The first author is supported by EPSRC grant EP/M016641/1. The second author is partially supported by NSF grant DMS-1400073.}

\date{\today}

\begin{abstract}
We determine the \emph{exact} minimum $\ell$-degree threshold for perfect matchings in $k$-uniform hypergraphs when the corresponding threshold for perfect fractional matchings is significantly less than $\frac{1}{2} \binom{n}{k- \ell}$. This extends our previous results 
that determine the minimum $\ell$-degree thresholds for perfect matchings in $k$-uniform hypergraphs for all $\ell\ge k/2$ and provides two new (exact) thresholds: $(k,\ell)=(5,2)$ and $(7,3)$.

\end{abstract}

\maketitle
\section{Introduction}
A \emph{perfect matching} in a hypergraph $H$ is a collection of vertex-disjoint edges of $H$ which cover the vertex set $V(H)$ of $H$.
Given a $k$-uniform hypergraph $H$ with an $\ell$-element vertex set $S$ (where $0 \leq \ell \leq k-1$) we define
$d_H (S)$ to be the number of edges containing $S$. The \emph{minimum $\ell$-degree $\delta _{\ell}
(H)$} of $H$ is the minimum of $d_H (S)$ over all $\ell$-element sets of vertices in $H$.
In recent years the problem of determining the minimum $\ell$-degree threshold that ensures a perfect matching in a $k$-uniform hypergraph has received much attention
(see e.g.~\cite{ko1, rrs1, pik, rrs, hps, mark, KOTmatch,   khan1, khan2, afh, zhao, czy,  zhao2, mycroft, tim, jie}).
See~\cite{rrsurvey} for a survey on matchings (and Hamilton cycles) in hypergraphs.

Suppose that $\ell, k, n \in \mathbb N$ such that $k \geq 3$, $\ell \le k-1$ and $k$ divides $n$.
Let $m_{\ell}(k, n)$ denote the smallest integer $m$ such that every $k$-uniform hypergraph $H$ on $n$ vertices with $\delta _{\ell}(H)\ge m$ contains a perfect matching.
The conjectured value of $m_{\ell}(k, n)$ comes from two types of constructions.
The first type of constructions are referred to as \emph{divisibility barriers}.
Given a set $V$ of $n$ vertices with a partition $A, B$,
let $E_{\text{odd}}(A, B)$ ($E_{\text{even}}(A, B)$) denote the family of all $k$-element subsets of $V$ that intersect $A$ in an odd (even) number of vertices.
Define $\mathcal B_{n,k}(A,B)$ to be the $k$-uniform hypergraph with vertex set $V$ and edge set $E_{\text{odd}} (A,B)$.  Note that the complement $\overline{\mathcal B}_{n,k} (A,B)$ of $\mathcal B_{n,k} (A,B)$ has edge set $E_{\text{even}} (A,B)$.
Define $\mathcal H_{\text{ext}} (n,k)$ to be the collection of the following hypergraphs. First, $\mathcal H_{\text{ext}} (n,k)$ contains all hypergraphs $\doB (A,B)$ where $|A|$ is odd. Second, if $n/k$ is odd
then $\mathcal H_{\text{ext}} (n,k)$ also contains all hypergraphs $\dB (A,B)$ where $|A|$ is even; if $n/k$ is even then $\mathcal H_{\text{ext}} (n,k)$ also contains all hypergraphs $\dB (A,B)$ where $|A|$ is odd.
It is easy to see that no hypergraph in $\mathcal H_{\text{ext}} (n,k)$ contains a perfect matching.
Define $\delta (n,k, \ell)$ to be the maximum of the minimum $\ell$-degrees among all the hypergraphs in  $\mathcal H_{\text{ext}} (n,k)$. Note that
$\delta (n,k, \ell)= (1/2+o(1))\binom{n-\ell}{k-\ell}$ but the general formula of
$\delta (n,k, \ell)$ is unknown
(see more discussion in \cite{zhao}).

The other type of extremal constructions are referred to as \emph{space barriers}.
Let $H^*(n,k)$ be the $k$-uniform hypergraph on $n$ vertices whose vertex set is
partitioned into two vertex classes
$A$ and $B$ of sizes  $n/k-1$ and $(1-1/k)n +1$ respectively and whose edge set consists precisely
of all those edges with at least one endpoint in $A$. Then $H^*(n,k)$ does not have a perfect matching and
$\delta _{\ell} (H^*(n,k)) =  \binom{n-\ell}{k-\ell}-\binom{(1-1/k)n -\ell +1}{k-\ell}\approx \left(1-\left(\frac{k-1}{k}\right) ^{k-\ell} \right ) \binom{n-\ell}{k-\ell}$.

An asymptotic version of the following conjecture appeared in~\cite{hps, survey} and the minimum vertex degree version was stated  in~\cite{KOTmatch}.

\begin{conj}\label{generalconj}
Let $k, \ell \in \mathbb N$ such that $\ell \leq k-1$. Then for sufficiently large $n\in k\mathbb{N}$,
\[
m_{\ell}(k, n)= \max \left \{ \delta (n,k,\ell), \ \binom{n-\ell}{k-\ell}-\binom{(1-1/k)n -\ell +1}{k-\ell}\right \} + 1.
\]
\end{conj}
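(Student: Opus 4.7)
The lower bound is routine: the constructions in $\mathcal H_{\text{ext}}(n,k)$ and $H^*(n,k)$ have the claimed minimum $\ell$-degree and no perfect matching. The work is the upper bound, and my plan is the standard absorption-plus-stability scheme.

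First, I would build an absorbing matching $M_{\text{abs}}$ of size $o(n)$ such that, for every $S \subseteq V(H) \setminus V(M_{\text{abs}})$ with $|S| \leq \a n$ and $k \mid |S|$, the induced subhypergraph on $V(M_{\text{abs}}) \cup S$ has a perfect matching. This follows from the usual random-greedy argument once one verifies, from the $\ell$-degree condition, that every $k$-set has polynomially many absorbers. Next, I would find an almost-perfect matching in $H - V(M_{\text{abs}})$: since the paper's hypothesis places the perfect fractional matching threshold significantly below $\tfrac12\binom{n}{k-\ell}$, the degree bound forces a perfect fractional matching, which can be rounded (via a weak-regularity plus random rounding step in the style of R\"odl, Ruci\'nski and Szemer\'edi) to an integral matching leaving $o(n)$ vertices uncovered. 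These leftovers are then swallowed by $M_{\text{abs}}$.

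To reach the \emph{exact} threshold I would invoke a stability dichotomy. Either $H$ is $\eps$-far (in edit distance) from every hypergraph in $\mathcal H_{\text{ext}}(n,k)$, in which case a stability version of the fractional matching bound provides strictly positive slack and closes the argument above at the exact threshold; or $H$ is $\eps$-close to some $\doB(A,B)$ or $\dB(A,B)$ with $|A|$ of the appropriate parity. In the latter, near-extremal case I would use the strict inequality $\delta_\ell(H) \geq \delta(n,k,\ell)+1$ to produce a small ``parity-correcting'' gadget: a bounded family of edges straddling $A$ and $B$ that allows one to alter the $A$-parity of a partial matching. Combining this gadget with a near-perfect matching that otherwise respects the partition then yields a perfect matching.

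The main obstacle is the near-extremal case. Because the degree condition beats $\delta(n,k,\ell)$ by only one, there is essentially no slack, and the single extra edge guaranteed by the strict inequality must be exploited very carefully; this is precisely where the restriction that the fractional threshold lie significantly below $\tfrac12\binom{n}{k-\ell}$ becomes useful, since it controls the size of $|A|$ in the extremal partition and thereby keeps the case analysis tractable. I expect this stability step, rather than the absorption or fractional rounding, to require the real work and to be responsible for the paper's restriction to instances such as $(k,\ell)=(5,2)$ and $(7,3)$.
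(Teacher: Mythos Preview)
The statement you are attempting to prove is labeled a \emph{Conjecture} in the paper, not a theorem; the paper does not claim to prove it in general and explicitly notes that it remains open outside a handful of cases. So there is no ``paper's own proof'' to compare against, and your proposal cannot be a complete argument for the statement as written.

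The concrete gap is in the almost-perfect-matching step. You write that ``the paper's hypothesis places the perfect fractional matching threshold significantly below $\tfrac12\binom{n}{k-\ell}$,'' but Conjecture~\ref{generalconj} carries no such hypothesis: it is stated for all $1\le \ell\le k-1$. To run your fractional-matching step at the degree level $\max\{\delta(n,k,\ell),\,\binom{n-\ell}{k-\ell}-\binom{(1-1/k)n-\ell+1}{k-\ell}\}+1$ you would need to know that $c^*_{k,\ell}=1-\bigl(\tfrac{k-1}{k}\bigr)^{k-\ell}$, i.e.\ that the space barrier $H^*(n,k)$ is extremal for perfect fractional matchings. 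This is itself an open conjecture from~\cite{afh}, confirmed only for $\ell\ge k-4$. The paper's Theorem~\ref{mainthm} sidesteps this by leaving $c^*_{k,\ell}$ as a parameter in the result; the exact thresholds for $(k,\ell)=(5,2)$ and $(7,3)$ then follow not from any special near-extremal analysis but simply because $c^*_{k,\ell}<1/2$ happens to be known there.

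There is also a structural error in your absorption step. You assert that the $\ell$-degree condition alone guarantees polynomially many absorbers for every $k$-set, and invoke the stability dichotomy only afterwards to sharpen the threshold. This is backwards. At minimum degree $(1/2+o(1))\binom{n-1}{k-1}$ the absorbing property can genuinely fail: in $\mathcal B_{n,k}$ or $\overline{\mathcal B}_{n,k}$ there are pairs $x,y$ (one on each side) with essentially no common absorbers. The paper's main new contribution, Theorem~\ref{thm:abs}, is precisely that the dichotomy must come \emph{first}: either $H$ is $\eps$-close to $\mathcal B_{n,k}$ or $\overline{\mathcal B}_{n,k}$, or the absorbing matching exists. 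You also misplace the difficulty: the near-extremal case was already handled for all $1\le\ell\le k-1$ in~\cite{zhao} (quoted here as Theorem~\ref{extthm}), and it is the absorbing theorem together with the unknown value of $c^*_{k,\ell}$, not the extremal analysis, that limits the range of $(k,\ell)$ for which the conjecture is currently settled.
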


Note that for all $1\leq \ell \leq k-1$,
\[
\left( \frac{k-1}{k} \right) ^{k-\ell} < \left ( \frac{1}{e} \right)^{1-\frac{\ell}{k}} \ \ \text{ and } \ \
1- \left (\frac{k-1}{k} \right) ^{k \ln 2} \rightarrow \frac{1}{2} \ \ \text{
as } \ \ k \rightarrow \infty,
\]
where $\ln$ denotes the natural logarithm function.
Thus, for $1\ll k\ll n$, if $\ell$ is significantly bigger than $(1-\ln 2)k \approx 0.307k$
then  $\delta(n,k, \ell) > \binom{n-\ell}{k-\ell}-\binom{(1-1/k)n -\ell +1}{k-\ell}$.
On the other hand, if $\ell$ is smaller than $(1-\ln 2)k$ then $\delta(n,k, \ell) < \binom{n-\ell}{k-\ell}-\binom{(1-1/k)n -\ell +1}{k-\ell}$ for sufficiently large $n$.

Conjecture~\ref{generalconj} has been proven in a number of special cases. Indeed, R\"odl, Ruci\'nski and Szemer\'edi~\cite{rrs} proved the conjecture for $\ell= k-1$.
The authors~\cite{zhao, zhao2} generalized this result by showing $m_{\ell}(k, n)=  \delta (n,k,\ell)+1$ for all $k/2 \leq \ell \leq k-1$
(independently Czygrinow and Kamat~\cite{czy} proved this for $(k, \ell)=(4,2)$).
In the case when $(k, \ell)=(3,1)$, Conjecture~\ref{generalconj} was confirmed by K\"uhn, Osthus and Treglown~\cite{KOTmatch} and independently
Khan~\cite{khan1}. Khan~\cite{khan2} also resolved the case when $(k, \ell)=(4,1)$. Alon, Frankl, Huang, R\"odl, Ruci\'nski and Sudakov~\cite{afh}  determined $m_{\ell}(k, n)$ asymptotically in the case when
$(k, \ell)= (5, 1)$, $(5, 2)$, $(6, 2)$, and  $(7, 3)$. Other than these results, no other
asymptotic or exact  results are known (the best known general bounds are due to K\"uhn, Osthus and Townsend~\cite{tim}).

A connection between  $m_{\ell}(k, n)$ and the minimum $\ell$-degree that forces a \emph{perfect fractional matching} was discovered in \cite{afh}.
Let $H$ be a $k$-uniform hypergraph on $n$ vertices. A \emph{fractional matching} in $H$ is a function $w: E(H) \rightarrow [0,1]$ such that for each $v \in V(H)$ we have that $\sum _{e \ni v} w(e)\leq1$. Then $\sum _{e \in E(H)} w(e)$ is the \emph{size} of $w$. If the size of the largest fractional matching $w$ in $H$ is $n/k$ then we say that $w$ is a \emph{perfect fractional matching}.
Given $k,\ell \in \mathbb N$ such that $\ell \leq k-1$, define $c^*_{k,\ell}$ to be the smallest number $c$ such that every $k$-uniform hypergraph $H$ on $n$ vertices with $\delta _{\ell} (H) \geq (c +o(1)) \binom{n-\ell}{k-\ell}$ contains a perfect fractional matching. It is easy to see that
the hypergraph $H^* (n,k)$ defined earlier contains no perfect fractional matching. Thus $c^*_{k,\ell}\ge 1-\left(\frac{k-1}{k}\right) ^{k-\ell}$.
Alon et al.  \cite[Theorem 1.1]{afh} showed that for fixed $k, \ell$, as $n\in  k\mathbb{N}$ tends to infinity,
\begin{equation}
\label{eq:afh}
m_{\ell}(k, n)= \left( \max \left \{ \frac12, \ c^*_{k, \ell} \right \} + o(1) \right) \binom{n-\ell}{k-\ell}.
\end{equation}
Furthermore, in~\cite{afh} the authors conjectured that $c^*_{k,\ell}= 1-\left(\frac{k-1}{k}\right) ^{k-\ell}$ and confirmed this for $\ell \ge k-4$.
Together with \eqref{eq:afh}, this gives the aforementioned (asymptotic) results on  $m_{\ell}(k, n)$ for $(k, \ell)=  (5, 1)$, $(5, 2)$, $(6, 2)$ and $(7, 3)$.

In this note we prove the following refinement of \eqref{eq:afh}.
\begin{thm} \label{mainthm}
Fix $k, \ell \in \mathbb N$ with $\ell \leq k-1$ and let $n\in  k\mathbb{N}$. Then
\[
m_{\ell}(k, n)= \max \left \{ \delta (n,k,\ell)+1, \ (c^*_{k, \ell}  + o(1)) \binom{n-\ell}{k-\ell} \right \}.
\]
\end{thm}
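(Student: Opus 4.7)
The lower bound follows directly from the two extremal families described in the introduction. By definition $\mathcal{H}_{\text{ext}}(n,k)$ contains a hypergraph with no perfect matching and minimum $\ell$-degree equal to $\delta(n,k,\ell)$, so $m_\ell(k,n) \ge \delta(n,k,\ell)+1$. The definition of $c^*_{k,\ell}$ likewise supplies, for every $\varepsilon>0$ and sufficiently large $n$, a $k$-graph with $\delta_\ell \ge (c^*_{k,\ell}-\varepsilon)\binom{n-\ell}{k-\ell}$ admitting no perfect fractional matching (and a fortiori no perfect matching), giving $m_\ell(k,n) \ge (c^*_{k,\ell}-o(1))\binom{n-\ell}{k-\ell}$.

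For the upper bound I would split on whether $c^*_{k,\ell} > 1/2$. If so, then since $\delta(n,k,\ell)=(1/2+o(1))\binom{n-\ell}{k-\ell}$ the maximum in the statement equals $(c^*_{k,\ell}+o(1))\binom{n-\ell}{k-\ell}$, and the required upper bound is precisely \eqref{eq:afh} of~\cite{afh}. The substantive case is therefore $c^*_{k,\ell} \le 1/2$, where one must prove the exact value $\delta(n,k,\ell)+1$. So let $H$ be a $k$-graph on $n\in k\mathbb{N}$ vertices with $\delta_\ell(H)\ge \delta(n,k,\ell)+1$; then $\delta_\ell(H)\ge (1/2+o(1))\binom{n-\ell}{k-\ell}$, which (when $c^*_{k,\ell} < 1/2$) comfortably exceeds $(c^*_{k,\ell}+o(1))\binom{n-\ell}{k-\ell}$, so in particular $H$ admits a perfect fractional matching.

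The plan from here is to apply the absorbing method together with a stability argument, in the spirit of the authors' earlier work~\cite{zhao, zhao2}. The proof bifurcates on whether $H$ is $\gamma$-close (in a suitable structural sense) to some $B\in \mathcal{H}_{\text{ext}}(n,k)$. In the non-extremal case, the slack between $\delta_\ell(H)$ and $c^*_{k,\ell}\binom{n-\ell}{k-\ell}$ leaves room to (a) build a linear-sized absorbing matching $M_{\text{abs}}$ via a standard ``reachability'' absorbing lemma, (b) use the fractional-matching threshold to find a perfect fractional matching in $H-V(M_{\text{abs}})$, (c) round it to an integer matching leaving only $o(n)$ vertices uncovered (by a Frankl--R\"odl-style argument), and (d) absorb the leftover via $M_{\text{abs}}$ to obtain a perfect matching.

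The main obstacle will be the extremal case, where $H$ is $\gamma$-close to some $B\in \mathcal{H}_{\text{ext}}(n,k)$ with associated vertex partition $A \cup (V\setminus A)$. Here the reachability arguments used to construct $M_{\text{abs}}$ typically degenerate, and one must instead exploit the exact hypothesis $\delta_\ell(H) > \delta(n,k,\ell) \ge \delta_\ell(B)$. The strategy is to show that the edges of $H$ that are not present in $B$ are distributed in a way permitting a perfect matching which respects the parity constraints imposed by $|A|$ and $n/k$, through a case analysis over the finitely many structural types in $\mathcal{H}_{\text{ext}}(n,k)$ together with a parity-correction absorbing argument within each case. This mirrors the extremal analyses of~\cite{zhao, zhao2} but extends them to the wider range of $(k,\ell)$ with $c^*_{k,\ell}\le 1/2$, so capturing the new pairs $(5,2)$ and $(7,3)$.
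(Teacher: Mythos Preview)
Your high-level architecture---split into extremal and non-extremal cases, handle the non-extremal case by absorbing plus an almost-perfect matching from the fractional threshold, handle the extremal case by a direct structural argument---is exactly what the paper does. Your case split on whether $c^*_{k,\ell}>1/2$, invoking~\eqref{eq:afh} in that case, is a legitimate shortcut (the paper instead proves a single Theorem~\ref{main2} that covers both regimes uniformly).

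However, you have the allocation of difficulty inverted. The extremal case does \emph{not} require any extension of~\cite{zhao, zhao2}: Theorem~4.1 of~\cite{zhao} (quoted here as Theorem~\ref{extthm}) already holds for every $1\le\ell\le k-1$, so once $H$ is $\eps$-close to $\mathcal B_{n,k}$ or $\overline{\mathcal B}_{n,k}$ and $\delta_\ell(H)>\delta(n,k,\ell)$, a perfect matching follows by citation. The restriction $\ell\ge k/2$ in~\cite{zhao,zhao2} applied only to their \emph{absorbing} theorems, not to the extremal analysis.

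Conversely, what you dismiss as ``a standard `reachability' absorbing lemma'' is precisely the new content of the paper. The H\`an--Person--Schacht lemma~\cite{hps} you implicitly appeal to requires $\delta_\ell(H)\ge(1/2+\gamma)\binom{n}{k-\ell}$ for a fixed $\gamma>0$; here one only has $\delta_\ell(H)\ge\delta(n,k,\ell)+1=(1/2+o(1))\binom{n-\ell}{k-\ell}$, with no slack above $1/2$, and being non-extremal does not by itself boost the degree. The paper's Theorem~\ref{thm:abs} establishes the absorbing-or-extremal dichotomy under the weaker hypothesis $\delta_1(H)\ge(1/2-\alpha)\binom{n-1}{k-1}$; its proof (Lemmas~\ref{absorbing} and~\ref{lem:main}) is where the real work lies, and is what extends the result from $\ell\ge k/2$ to the new pairs $(5,2)$ and $(7,3)$.
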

Although it looks like a small improvement, Theorem~\ref{mainthm} enables us to determine $m_{\ell}(k, n)$ \emph{exactly} whenever $c^*_{k, \ell}< 1/2$.  
A recent result of  K\"uhn, Osthus and Townsend~\cite[Theorem 1.9]{tim} showed that
\[
c^*_{k, \ell} \le \frac{k- \ell}{k} - \frac{k - \ell - 1}{k^{k-\ell}}
\]
for all $\ell \le k -2$. Together with $c^*_{k, k-1}= 1/k$ (see \cite{rrs1}), this implies that $c^* _{k, \ell} < 1/2$ for all $k/2 \le \ell \le k -1$. Consequently Theorem~\ref{mainthm} implies the aforementioned results of \cite{zhao, zhao2}: $m_{\ell}(k, n)= \delta (n,k, \ell)+1$ for all $k/2 \leq \ell \leq k-1$.

Furthermore, in~\cite{afh} it was shown that $c^*_{5,2}=61/125<1/2$ and $c^*_{7,3}=1105/2401<1/2$. Therefore an immediate consequence of Theorem~\ref{mainthm}  is the following corollary. 

\begin{cor}\label{cor1}
Suppose that $(k,\ell)=(5,2)$ or $(7,3)$. Then $m_{\ell}(k, n)= \delta (n,k, \ell)+1$ for sufficiently large $n$.
\end{cor}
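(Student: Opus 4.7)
The plan is to simply invoke Theorem~\ref{mainthm} and verify that, for the two pairs $(k,\ell)$ in question, the first quantity in the maximum dominates the second for all sufficiently large $n$. By Theorem~\ref{mainthm},
\[
m_{\ell}(k,n) = \max\left\{\delta(n,k,\ell)+1, \ (c^*_{k,\ell} + o(1))\binom{n-\ell}{k-\ell}\right\},
\]
so it suffices to check that
\[
(c^*_{k,\ell} + o(1))\binom{n-\ell}{k-\ell} \le \delta(n,k,\ell) + 1
\]
for $n$ large, in which case the maximum equals $\delta(n,k,\ell)+1$, exactly as claimed.

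To carry this out, I would recall from the discussion preceding the corollary that $\delta(n,k,\ell) = (1/2 + o(1))\binom{n-\ell}{k-\ell}$, and that the excerpt records (via the results of \cite{afh}) the two explicit values $c^*_{5,2} = 61/125$ and $c^*_{7,3} = 1105/2401$. Both of these are strictly less than $1/2$, so one can fix a constant $\eta > 0$ with $\eta < \tfrac12 - c^*_{k,\ell}$ (for example $\eta = 3/500$ when $(k,\ell)=(5,2)$, and $\eta = 95/2401$ when $(k,\ell)=(7,3)$). Then for all sufficiently large $n$ the $o(1)$ terms on both sides are absorbed into $\eta$, giving
\[
(c^*_{k,\ell} + o(1))\binom{n-\ell}{k-\ell} \le (c^*_{k,\ell} + \eta)\binom{n-\ell}{k-\ell} < \tfrac12 \binom{n-\ell}{k-\ell} \le \delta(n,k,\ell) + 1,
\]
which is precisely the required inequality.

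There is no genuine obstacle here: the corollary is an immediate arithmetic consequence of Theorem~\ref{mainthm} combined with the two explicit fractional-matching thresholds computed in \cite{afh}. All of the substantive work is encapsulated in Theorem~\ref{mainthm}, so the proof of the corollary reduces to checking the strict inequality $c^*_{k,\ell} < 1/2$ and inserting the known asymptotics of $\delta(n,k,\ell)$.
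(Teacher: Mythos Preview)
Your proposal is correct and matches the paper's approach exactly: the paper simply observes that $c^*_{5,2}=61/125<1/2$ and $c^*_{7,3}=1105/2401<1/2$, so by Theorem~\ref{mainthm} and the fact that $\delta(n,k,\ell)=(1/2+o(1))\binom{n-\ell}{k-\ell}$, the term $\delta(n,k,\ell)+1$ dominates the maximum for large $n$. Your write-up is just a slightly more explicit version of this one-line deduction.
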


After this paper was submitted, Han~\cite{hanerdos} showed that $c^* _{k,\ell}<1/2$ in the case when $0.42k\leq \ell <k/2$ or $(k,\ell)=(12,5),(17,7)$. Thus, together with Theorem~\ref{mainthm} this resolves Conjecture~\ref{generalconj} in these cases.

 Let us highlight the ideas behind the proof of Theorem~\ref{mainthm}. It is informative to first recall the proof of \eqref{eq:afh}, which is an application of the absorbing method. The authors of \cite{afh} first applied a lemma of H\`an, Person and Schacht~\cite[Lemma 2.4]{hps}, which states that every $k$-uniform hypergraph $H$ with $\delta_{\ell}(H)\ge (1/2 + o(1)) \binom{n}{k - \ell}$ contains a small matching $M$ that can absorb any vertex set $W\subset V(H)\setminus V(M)$ of size much smaller than $M$ (that is, there is matching in $H$ on $V(M)\cup W$ exactly). Next they found an almost perfect matching in $H[V(H)\setminus V(M)]$ by first finding a fractional matching and then converting it to an integer matching. This immediately provides the desired perfect matching of $H$ because $M$ can absorb all uncovered vertices in $V(H)\setminus V(M)$. In order to prove Theorem~\ref{mainthm}, we prove a result stronger than \cite[Lemma 2.4]{hps}, Theorem~\ref{thm:abs}, which implies that every $k$-uniform hypergraph $H$ with $\delta_{\ell}(H)\ge (1/2 - o(1)) \binom{n}{k - \ell}$ either contains the aforementioned $M$ or looks like a hypergraph in $\mathcal H_{\text{ext}} (n,k)$. If $H$ contains $M$, then we proceed as in \cite{afh} (except that we apply a lemma from \cite{tim} when converting a fractional matching to an integer matching); if $H$ looks like a hypergraph in $\mathcal H_{\text{ext}} (n,k)$ and $\delta_{\ell}(H)\ge \delta(n, k, \ell) + 1$, then we obtain a perfect matching by applying \cite[Theorem 4.1]{zhao}.

Theorem~\ref{thm:abs} is the main contribution of this note -- it is stronger than two absorbing theorems in our previous papers \cite[Theorem 3.1]{zhao} and \cite[Theorem 3.1]{zhao2}, in which we assume that $\ell \ge k/2$. The proof of Theorem~\ref{thm:abs} is actually shorter than those of the two previous absorbing theorems because 1) we use a different absorbing structure which allows us to apply a lemma from \cite{lo}; 2) when proving Lemma~\ref{lem:main}, we avoid using auxiliary hypergraphs and obtain the structure of $H$ by considering the neighborhoods of the vertices of $H$ directly.

\vspace{2mm}
\noindent\textbf{Notation:}
Given a set $X$ and  $r\in \mathbb N$, we write $\binom{X}{r}$ for the set of all $r$-element subsets  of $X$.
Let $H$ be a $k$-uniform hypergraph.
We write $V(H)$ for the vertex set and $E(H)$ for the edge set  of $H$.
Define $e(H):=|E(H)|$. Given ${v}\in V(H)$,  we write $N_H(v)$  to denote the \emph{neighborhood of $v$}, that is, the family of those $(k-1)$-subsets of $V(H)$ which, together with $v$, form an edge in $H$. Given $X \subseteq V(H)$, we write $H[X]$ for the \emph{subhypergraph of $H$ induced by $X$},  namely, $H[X] := (X, E(H)\cap \binom{X}{k})$. We denote the \emph{complement of $H$} by $\overline{H}$. That is, $\overline{H} := (V(H), \binom{V(H)}{k}\setminus E(H))$.
Suppose that $n,k \in \mathbb N$. When $|A|=\lfloor n/2 \rfloor$ and $|B|=\lceil n/2 \rceil$, we define $\mathcal B_{n,k}:=\mathcal B_{n,k}(A,B)$ and $\overline{\mathcal B}_{n,k} := \overline{\mathcal B}_{n,k}(A, B)$.

We will often write $0<a_1 \ll a_2 \ll a_3$ to mean that we can choose the constants
$a_1,a_2,a_3$ from right to left. More
precisely, there are increasing functions $f$ and $g$ such that, given
$a_3$, whenever we choose some $a_2 \leq f(a_3)$ and $a_1 \leq g(a_2)$, all
calculations needed in our proof are valid.
Hierarchies with more constants are defined in the obvious way.

\section{Proof of Theorem~\ref{mainthm}}
The lower bound for $m_{\ell}(k, n)$ in Theorem~\ref{mainthm} follows from the definitions of $\delta(n, k, \ell)$ and $c^*_{k, \ell}$ immediately. The following (more general) result provides the desired upper bound for $m_{\ell}(k, n)$. 
\begin{thm}\label{main2}
Given any $\th >0$, $k, \ell, \ell ' \in \mathbb N$ where $1\leq \ell, \ell ' \leq k-1$ there is an $n_0 \in \mathbb N$ such that the following holds. Let $n \geq n_0$ where $k$ divides $n$.
 If $H$ is a $k$-uniform hypergraph on $n$ vertices with
\begin{equation}
\label{eq:2deg}
\delta _{\ell} (H)  >  \delta (n,k,\ell) \ \ \text{ and } \ \ \delta_{\ell '} (H) > (c^*_{k,\ell'} +\th) \binom{n-\ell '}{k-\ell '} ,
\end{equation}
then $H$ contains a perfect matching.
\end{thm}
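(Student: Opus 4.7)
The plan is to run a dichotomy-based absorbing argument, distinguishing a \emph{non-extremal} case in which a small absorbing matching is produced and then completed to a perfect matching via a perfect fractional matching, from an \emph{extremal} case in which $H$ is close to a member of $\mathcal H_{\text{ext}}(n,k)$ and an existing stability-type theorem finishes the job.

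First I would apply the strengthened absorbing lemma, Theorem~\ref{thm:abs}. Its conclusion is the following dichotomy: for any $k$-uniform hypergraph $H$ on $n$ vertices with $\delta_\ell(H) \ge (1/2 - o(1))\binom{n-\ell}{k-\ell}$, either (i) $H$ contains a small matching $M$ that is absorbing (every $W \subseteq V(H)\setminus V(M)$ with $|W|$ much smaller than $|V(M)|$ and $k \mid |W|$ extends to a perfect matching of $H[V(M)\cup W]$), or (ii) $H$ is structurally close to some hypergraph in $\mathcal H_{\text{ext}}(n,k)$. The hypothesis $\delta_\ell(H) > \delta(n,k,\ell) = (1/2 + o(1))\binom{n-\ell}{k-\ell}$ from \eqref{eq:2deg} is comfortably strong enough to invoke this.

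In case (i), set $H' := H[V(H)\setminus V(M)]$. Since $|V(M)| = o(n)$, the second inequality in \eqref{eq:2deg} transfers to $H'$ with only an $o(1)$ loss, so $\delta_{\ell'}(H') \ge (c^*_{k,\ell'} + \theta/2)\binom{n - |V(M)| - \ell'}{k - \ell'}$. By the definition of $c^*_{k,\ell'}$, $H'$ admits a perfect fractional matching. I would then convert this fractional matching into an integer matching via the rounding lemma from~\cite{tim}, obtaining a matching of $H'$ that covers all but a small set $W$ of vertices. Since $k \mid n$ and $k \mid |V(M)|$, we get $k \mid |W|$, and the absorbing property of $M$ yields a perfect matching in $H$. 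In case (ii), where $H$ is close to some $B \in \mathcal H_{\text{ext}}(n,k)$, the hypothesis $\delta_\ell(H) > \delta(n,k,\ell)$ together with~\cite[Theorem 4.1]{zhao} produces a perfect matching directly; this is precisely the role of the first inequality of \eqref{eq:2deg}.

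The main obstacle is Theorem~\ref{thm:abs} itself: pushing the absorbing lemma down to the threshold $(1/2 - o(1))\binom{n-\ell}{k-\ell}$, which is below the bound in~\cite[Lemma 2.4]{hps}. At this threshold, a naive random selection of absorbers may fail for some pairs (or tuples) of vertices, and one has to argue that any such failure forces the neighbourhoods in $H$ to split according to an odd/even pattern, which in turn places $H$ near $\mathcal B_{n,k}$ or $\overline{\mathcal B}_{n,k}$. The authors indicate that they use a cleaner absorbing gadget feeding into the Lo--Markstr\"om framework~\cite{lo}, and that they extract the extremal structure by directly inspecting neighbourhoods of vertices of $H$, avoiding the auxiliary hypergraph constructions used in~\cite{zhao, zhao2}.
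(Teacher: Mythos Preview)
Your proposal is correct and follows the paper's proof essentially line for line: apply Theorem~\ref{thm:abs} to get the dichotomy, handle the extremal case via Theorem~\ref{extthm} (\cite[Theorem~4.1]{zhao}), and in the non-extremal case remove the absorbing matching, use Lemma~\ref{lem:frac} from~\cite{tim} to convert the perfect fractional matching of $H'$ into a near-perfect integer matching, and absorb the leftover vertices. The only cosmetic discrepancy is that Theorem~\ref{thm:abs} is stated with a $\delta_1$ hypothesis rather than a $\delta_\ell$ one, and its extremal conclusion is closeness to $\mathcal B_{n,k}$ or $\overline{\mathcal B}_{n,k}$ specifically; both are harmless since $\delta_\ell(H)>\delta(n,k,\ell)$ forces $\delta_1(H)\ge(1/2-\alpha)\binom{n-1}{k-1}$.
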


The proof of Theorem~\ref{main2} splits into \emph{extremal} and \emph{non-extremal} cases, the former case being when $H$ looks like an element of $\mathcal H_{\text ext} (n,k)$. To make this precise we introduce more notation.
Let $\eps>0$. Suppose that $H$ and $H'$ are $k$-uniform hypergraphs on $n$ vertices. We say that $H$ is \emph{$\eps$-close to $H'$} if $H$ becomes a copy of $H'$ after adding and deleting at most $\eps n^k$ edges. More precisely,  $H$ is $\eps$-close to $H'$ if there is an isomorphic copy $\tilde{H}$ of $H$ such that $V(\tilde{H}) = V(H')$ and $|E(\tilde{H})\triangle E(H')| \le \eps n^k$.

Our proof of the non-extremal case uses the absorbing method. Given a $k$-uniform hypergraph $H$,  a set $S\subseteq V(H)$ is called an \emph{absorbing set for $Q\subseteq V(H)$}, if both $H[S]$ and $H[S\cup Q]$ contain perfect matchings. In this case, if the matching covering $S$ is $M$, we also say \emph{$M$ absorbs $Q$}.

Our main result, Theorem~\ref{thm:abs}, extends \cite[Theorem 3.1]{zhao2}. It ensures that if $H$ is as in Theorem~\ref{main2} then  $H$ contains a small absorbing matching or $H$ is close to one of $\mathcal B_{n,k}$ and  $\overline{\mathcal B}_{n,k}$. We postpone its proof to the next subsection.

\begin{thm}\label{thm:abs}
Given any $\eps >0$ and integer $k \geq 2$, there exist $0< \a, \xi < \eps$
and $n_0 \in \mathbb N$ such that the following holds.  Suppose that $H$ is a $k$-uniform hypergraph on $n \geq n_0$ vertices. If
$$\delta _{1} (H) \geq \left( \frac{1}{2}-\a \right) \binom{n - 1}{k- 1}$$
then $H$ is $\eps$-close to $\mathcal B_{n,k}$ or $\overline{\mathcal B}_{n,k}$, or $H$ contains
a matching $M$ of size $|M| \le \xi n/k$ that absorbs any set $W\subseteq V(H) \setminus V(M)$ such that $|W| \in k\mathbb{N}$ with $|W| \le \xi^2 n$.
\end{thm}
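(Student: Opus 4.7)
The plan is to apply the absorbing method in the Lo--Markstr\"om framework. Fix a suitable integer $t \geq 1$ and, following \cite{lo}, call two vertices $u, v \in V(H)$ \emph{$\beta$-reachable} if the number of $(tk - 1)$-subsets $S$ of $V(H) \setminus \{u, v\}$ such that both $H[S \cup \{u\}]$ and $H[S \cup \{v\}]$ admit perfect matchings is at least $\beta n^{tk - 1}$. The absorbing lemma of Lo provides the desired matching $M$ of size at most $\xi n / k$ as soon as every vertex is $\beta$-reachable to a positive fraction of the remaining vertices. Thus the task reduces to a dichotomy: either this reachability density condition holds, in which case we extract $M$ directly; or else we must show that $H$ is $\eps$-close to $\mathcal B_{n,k}$ or $\overline{\mathcal B}_{n,k}$.

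The heart of the proof is the structural half (the authors' Lemma~\ref{lem:main}). First I would argue that a pair $\{u, v\}$ fails to be $\beta$-reachable only when the link hypergraphs $N_H(u)$ and $N_H(v)$ have a very small symmetric difference: otherwise one can build $\Omega(n^{tk-1})$ reachability witnesses by picking $t - 1$ disjoint edges avoiding $\{u, v\}$ and completing via a swap between $N_H(u) \setminus N_H(v)$ and $N_H(v) \setminus N_H(u)$. If reachability fails for many pairs, one then defines an equivalence-type relation on $V(H)$ (calling $u \sim v$ when $N_H(u) \approx N_H(v)$) whose classes are all of linear size; the degree hypothesis $\delta_1(H) \geq (1/2 - \alpha) \binom{n-1}{k-1}$ rules out more than two such classes. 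The resulting bipartition $V(H) = A \cup B$ carries an implicit parity constraint on the edges of $H$, which after a careful count identifies $H$ with one of $\mathcal B_{n,k}(A, B)$ or $\overline{\mathcal B}_{n,k}(A, B)$ up to at most $\eps n^k$ edges. Re-labelling to make $|A| = \lfloor n/2 \rfloor$ and $|B| = \lceil n/2 \rceil$ then yields $\eps$-closeness to $\mathcal B_{n,k}$ or $\overline{\mathcal B}_{n,k}$.

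The main obstacle is precisely this structural step. The degree hypothesis is just below the matching threshold of \cite{hps}, so a direct density or Tur\'an-style argument cannot force a partition; the parity must be detected through the subtler reachability obstruction. As the authors emphasize, their novelty is to avoid the auxiliary hypergraphs used in \cite{zhao, zhao2} and instead read off the bipartition $A, B$ directly from the link neighborhoods $N_H(v)$, which should shorten the argument considerably. Once this structural dichotomy is in place, verifying that the Lo--Markstr\"om absorbing matching $M$ indeed absorbs any $W \subseteq V(H)\setminus V(M)$ with $|W| \le \xi^2 n$ and $k \mid |W|$ follows from the standard random selection argument used in \cite{lo}.
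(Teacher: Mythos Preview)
Your overall framework---Lo--Markstr\"om absorbing plus a structural dichotomy---matches the paper, but the key structural implication you assert is false. You claim that a pair $\{u,v\}$ fails to be $\beta$-reachable only when $|N_H(u)\triangle N_H(v)|$ is small. Test this against the very hypergraphs you are trying to detect: in $\mathcal B_{n,k}(A,B)$, a vertex $u\in A$ and a vertex $v\in B$ have $N_H(u)$ and $N_H(v)$ essentially complementary, so the symmetric difference is of order $\binom{n-1}{k-1}$, yet a parity count shows no $(tk-1)$-set $S$ can make both $H[S\cup\{u\}]$ and $H[S\cup\{v\}]$ perfectly matchable, for any $t$. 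So non-reachable pairs can (and in the extremal case do) have \emph{large} symmetric difference, not small. Your proposed ``swap between $N_H(u)\setminus N_H(v)$ and $N_H(v)\setminus N_H(u)$'' does not assemble into a valid absorber either: a set $T\in N_H(u)\setminus N_H(v)$ gives an edge through $u$ but not through $v$, and there is no mechanism to turn two such one-sided pieces into a common $(tk-1)$-set.

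What the paper actually does is work with \emph{intersections} of neighborhoods, and crucially it uses a two-step notion: either $|N_H(x)\cap N_H(y)|\ge\gamma n^{k-1}$ (direct reachability via a common neighbor set), or there are $\ge\gamma n$ intermediate vertices $z$ with $|N_H(x)\cap N_H(z)|,\,|N_H(y)\cap N_H(z)|\ge\gamma n^{k-1}$ (reachability through $z$, giving $(2k-1)$-absorbers). The structural lemma fires only when \emph{both} fail for some fixed pair $x_0,y_0$; the bipartition is then read off as $X=\{v:|N_H(v)\cap N_H(y_0)|<\gamma n^{k-1}\}$ and $Y=\{v:|N_H(v)\cap N_H(x_0)|<\gamma n^{k-1}\}$, with the scarcity of intermediate vertices forcing $|V\setminus(X\cup Y)|\le\gamma n$. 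Your proposal omits this intermediate-vertex condition entirely, and without it you cannot deduce that the putative equivalence classes number at most two, nor that they partition almost all of $V$.
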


The next result from~\cite{zhao} ensures a perfect matching when our hypergraph $H$ is close to one of the
extremal hypergraphs $\mathcal B_{n,k}$ and $\overline{\mathcal B}_{n,k}$.

\begin{thm}\cite[Theorem 4.1]{zhao}\label{extthm}
Given $1\le \ell \le k-1$, there exist $\eps > 0$ and $n_0 \in \mathbb{N}$ such that the following holds. Suppose that $H$ is a $k$-uniform hypergraph on $n\ge n_0$ vertices such that $n$ is divisible by $k$. If  $\delta_{\ell} (H) >  \d(n, k, \ell)$ and $H$ is $\eps$-close to $\mathcal B_{n,k}$ or $\overline{\mathcal B}_{n,k}$, then $H$ contains a perfect matching.
\end{thm}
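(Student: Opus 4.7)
The plan is to prove Theorem~\ref{extthm} by combining the near-bipartite structure granted by $\eps$-closeness with the slack in the strict inequality $\delta_\ell(H) > \delta(n,k,\ell)$. Without loss of generality, assume $H$ is $\eps$-close to $\doB := \doB(A,B)$ with $|A| = \lfloor n/2 \rfloor$ and $|B| = \lceil n/2 \rceil$; the $\dB$ case is analogous. I first classify vertices as \emph{good} if all but $O(\sqrt{\eps})\binom{n-1}{k-1}$ of the edges of $H$ through them have even intersection with $A$ (i.e.\ agree with the $\doB$ pattern), and \emph{bad} otherwise. A double-counting of the at most $\eps n^k$ discrepancy edges between $H$ and $\doB(A,B)$ shows that there are at most $O(\sqrt{\eps})n$ bad vertices; in particular, the typical $\ell$-set inside good vertices has $(1/2-o(1))\binom{n-\ell}{k-\ell}$ $\doB$-neighbors in $H$.

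The central case distinction is by parity of $|A|$. A perfect matching in $\doB(A',B')$ exists if and only if $|A'|$ is even. When $|A|$ is even, $\doB(A,B) \notin \mathcal H_{\text{ext}}(n,k)$, no parity correction is needed, and Step 3 below applies directly. When $|A|$ is odd, $\doB(A,B)\in\mathcal H_{\text{ext}}(n,k)$, so the hypothesis $\delta_\ell(H) > \delta(n,k,\ell) \geq \delta_\ell(\doB(A,B))$ must be used to locate an edge $e_0 \in E(H)$ with $|e_0 \cap A|$ odd. Such an edge exists because, otherwise, $E(H) \subseteq E(\doB(A,B))$; combining this containment with the strict inequality on $\delta_\ell$ and the explicit form of $\delta(n,k,\ell)$ for $\ell$-sets lying in $A$ versus $B$ yields a contradiction. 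I reserve $e_0$ as a parity-fixer: after deleting $V(e_0)$, the remaining class $A' := A \setminus V(e_0)$ has even size, and the parity obstruction is removed from the subproblem on $V(H) \setminus V(e_0)$.

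The third step is to find a perfect matching on $V(H) \setminus V(e_0)$ via the absorbing method, mirroring the strategy of~\cite{afh} but adapted to the bipartite constraint. Build a small absorbing matching $M_{\text{abs}}$ of size $O(\xi n)$, consisting of $\doB$-type edges among good vertices, with the property that it can absorb any vertex set $W\subseteq V(H)\setminus (V(e_0)\cup V(M_{\text{abs}}))$ of size $\le \xi^2 n$ for which $|W \cap A|$ is even. Abundance of absorbers follows from a H\`an--Person--Schacht style random-sampling argument, using that good $\ell$-sets have $(1/2 - o(1))\binom{n-\ell}{k-\ell}$ $\doB$-neighbors. Next, on the leftover vertex set, the $\doB$-subhypergraph restricted to good vertices retains min $\ell$-degree close to $\frac12\binom{n-\ell}{k-\ell}$, enough to guarantee a perfect fractional matching; converting this to an integer matching (by a now-standard lemma) covers all but a tiny set $W$ whose $A$-parity is correct by bookkeeping, and $M_{\text{abs}}$ absorbs $W$ to complete the perfect matching.

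The main obstacle is producing the parity-fixing edge $e_0$ when $|A|$ is odd, as this is where the single bit of slack in $\delta_\ell(H) > \delta(n,k,\ell)$ must be converted into a combinatorial object. The strict inequality is essential here: relaxing to equality would allow $H = \doB(A,B)$ with $|A|$ odd, which has no perfect matching. The argument requires carefully identifying that the degree surplus must manifest as a non-$\doB$ edge, ruling out scenarios where the surplus could conceivably be absorbed by the $\doB$-edge structure alone; a careful pigeonhole against the extremal profile of $\delta_\ell$ on $\doB(A,B)$ is needed. A secondary difficulty is tracking the $A$/$B$ balance globally: $e_0$, $M_{\text{abs}}$, and the near-perfect matching must together use exactly $|A|$ vertices from $A$, so each sub-construction must be steered to respect the cumulative parity and balance constraint.
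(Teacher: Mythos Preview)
This paper does not contain a proof of Theorem~\ref{extthm}: the result is imported wholesale from \cite[Theorem~4.1]{zhao} and used as a black box in the proof of Theorem~\ref{main2}. There is therefore no ``paper's own proof'' to compare your proposal against. What I can do is assess whether your sketch would stand on its own.

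Your outline captures the right coarse shape (classify vertices as good/bad, fix parity, then match greedily on the structured part), but it has two genuine gaps that would block a complete proof.

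First, you never say what happens to the bad vertices. You bound their number by $O(\sqrt{\eps})n$ and then proceed as though the hypergraph consists entirely of good vertices. But bad vertices need not have many $\doB$-type edges through them, so neither your absorbing matching nor your near-perfect matching is guaranteed to cover them. In the actual extremal analysis (as carried out in \cite{zhao}) one must first \emph{reclassify} vertices---moving each bad vertex to whichever side of the partition makes it look good---and then cover any residual exceptional vertices by hand-picked edges before doing anything else. Your sketch skips this entirely.

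Second, and more seriously, the phrase ``whose $A$-parity is correct by bookkeeping'' hides a real obstruction. Your absorber $M_{\text{abs}}$ is built from $\doB$-type edges, each meeting $A$ in an even number of vertices; hence $M_{\text{abs}}$ can only absorb a set $W$ with $|W\cap A|$ even. But the almost-perfect matching you obtain from the fractional-to-integer lemma (Lemma~\ref{lem:frac} or its analogues) gives no control over the parity of $|W\cap A|$: it just returns some matching covering all but $o(n)$ vertices. A single parity-fixing edge $e_0$ removed at the start does not help here, because the parity defect is introduced \emph{after} $e_0$ is removed, by the uncontrolled leftover of the near-perfect matching. To make this route work you would need either a second parity-correction mechanism inside the absorber, or a direct construction of the near-perfect matching that tracks the $A$-count exactly (which is closer to what \cite{zhao} actually does, bypassing absorption in the extremal case altogether).
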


The final tool required for the proof of Theorem~\ref{main2} is a weaker version of Lemma 5.6 in~\cite{tim}.

\begin{lemma}\cite{tim} \label{lem:frac}
Let $k \geq 2$ and $1 \leq \ell \leq k-1$ be integers, and let $\eps >0$. Suppose that for some $b,c \in (0,1)$ and some $n_0 \in \mathbb N$, every $k$-uniform hypergraph $H$ on $n \geq n_0$ vertices with $\delta _{\ell} (H) \geq cn^{k-\ell}$ has a fractional matching of size $(b+\eps)n$. Then there exists an $n_0 ' \in \mathbb N$ such that any $k$-uniform hypergraph $H$ on $n \geq n'_0$ vertices with $\delta _{\ell} (H) \geq (c+ \eps) n^{k-\ell}$ contains a matching of size at least $bn$.
\end{lemma}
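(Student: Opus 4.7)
The plan is to combine the fractional-matching hypothesis with a probabilistic rounding procedure, iterating if necessary to boost a short matching to one of size at least $bn$.

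First I would apply the hypothesis to $H$ itself: since $\delta_\ell(H) \ge (c+\eps)n^{k-\ell} \ge cn^{k-\ell}$, there is a fractional matching $w\colon E(H) \to [0,1]$ with $\sum_e w(e) \ge (b+\eps)n$. To convert $w$ into an integer matching, I would keep each edge $e$ independently with probability $\lambda w(e)$ for a small constant $\lambda>0$, then delete one edge from each vertex conflict. Because $\sum_{e\ni v}w(e)\le 1$, the expected size of the surviving matching is $\lambda(b+\eps)n - O(\lambda^2 n)$.

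A single pass only yields a matching of size $\Theta(\lambda n)$. To reach $bn$, I would iterate: after obtaining a partial integer matching $M$, set $H':=H[V(H)\setminus V(M)]$ on $n'=n-k|M|$ vertices, and verify the standard count
\[
\delta_\ell(H') \;\ge\; \delta_\ell(H) - k|M|\binom{n-\ell-1}{k-\ell-1} \;\ge\; c(n')^{k-\ell};
\]
then apply the hypothesis to $H'$ and repeat the rounding. The slack $\eps n^{k-\ell}$ in the $\ell$-degree condition (against the fact that $(n')^{k-\ell}\le n^{k-\ell}$) is exactly what allows the hypothesis to persist through the iterations, and after $O(1/\lambda)$ rounds the matching size exceeds $bn$.

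The main obstacle is the rounding step: a naive random selection may produce too many vertex conflicts if $H$ has large codegrees, since a single selected edge could then collide with many others at a common $(k-1)$-set. I would handle this either by a preliminary sparsification (e.g.\ random edge retention with probability $p=n^{-1/2}$) to bound $\Delta_2$ while preserving the $\ell$-degree condition up to lower-order terms, or by invoking a weighted nibble theorem in the style of Pippenger--Spencer or Frankl--R\"odl, which extracts an integer matching of size $(1-o(1))(b+\eps)n$ directly from $w$ under a mild codegree assumption on its support.
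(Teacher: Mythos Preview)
The paper does not give its own proof of this lemma; it is quoted (as a weakening of Lemma~5.6) from K\"uhn--Osthus--Townsend~\cite{tim}. So there is no in-paper argument to compare against, but your outline has a real gap and differs from the argument in~\cite{tim}.

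The iteration does not persist long enough to accumulate a matching of size $bn$. After deleting the $k|M|$ vertices of a partial matching $M$, the $\ell$-degree of the leftover hypergraph $H'$ on $n'=n-k|M|$ vertices can drop by as much as $\binom{n-\ell}{k-\ell}-\binom{n'-\ell}{k-\ell}$. Writing $x=k|M|/n$ and approximating binomials, the requirement $\delta_\ell(H')\ge c(n')^{k-\ell}$ becomes
\[
\Bigl(c+\eps-\tfrac{1}{(k-\ell)!}\Bigr)\;\ge\;\Bigl(c-\tfrac{1}{(k-\ell)!}\Bigr)(1-x)^{k-\ell}.
\]
Whenever $c<1/(k-\ell)!$ (in particular always when $\ell=k-1$), this forces $(1-x)^{k-\ell}\ge 1-\eps/\bigl(\tfrac{1}{(k-\ell)!}-c\bigr)$, so $x=O_{k,c}(\eps)$. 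But to reach a matching of size $bn$ you need $x$ to grow to $kb$, and the lemma allows $b$ close to $1/k$ with $\eps$ arbitrarily small. Thus ``the slack $\eps n^{k-\ell}$ \dots\ allows the hypothesis to persist through the iterations'' is false in general.

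Your fallback to a Pippenger--Spencer/Frankl--R\"odl nibble is also not available as stated: those theorems need the weighted $2$-degrees on the support of $w$ to be $o(1)$ of the weighted vertex degrees, but nothing prevents $w$ from concentrating on edges through a single $(k-1)$-set, in which case the weighted codegree equals the weighted degree. No ``mild codegree assumption'' comes for free here.

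The proof in~\cite{tim} bypasses both obstacles via the weak hypergraph regularity lemma. One passes to a reduced $k$-graph $R$ on $m$ clusters which inherits $\delta_\ell(R)\ge (c+\eps/2)m^{k-\ell}$, applies the fractional-matching hypothesis \emph{once} to $R$ to obtain a fractional matching of size $(b+\eps)m$, and then realises each weighted reduced edge by an integer matching inside the corresponding $\eps$-regular $k$-partite sub-$k$-graph of $H$ (where near-perfect matchings exist). This yields a matching in $H$ of size $(b+\eps-o(1))n>bn$ in a single step, with no iteration and no codegree hypothesis on $H$.
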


\medskip
\noindent
\textbf{Proof of Theorem~\ref{main2}.}
Choose $\eps >0$ from Theorem~\ref{extthm}. We may additionally assume that $\eps \ll \theta ,1/k$.
Let $0<\a, \xi<\eps$  be as in Theorem~\ref{thm:abs}.
Let $n$ be sufficiently large and divisible by $k$. Assume that $H$ is a $k$-uniform hypergraph on $n$ vertices satisfying \eqref{eq:2deg}.
Since $\delta _{\ell} (H)  >  \delta (n,k,\ell) = (1/2 - o(1)) \binom{n- \ell}{k- \ell}$, it follows that $\delta_1(H)\ge (1/2 - \a) \binom{n-1}{k-1}$. By Theorem~\ref{thm:abs}, $H$ is $\eps$-close to $\mathcal B_{n,k}$ or $\overline{\mathcal B}_{n,k}$, or $H$ contains a matching $M$ of size $|M| \le \xi n/k$ that absorbs any set $W\subseteq V(H) \setminus V(M)$ satisfying $|W| \in k\mathbb{N}$ with $|W| \le \xi^2 n$.
In the former case, since $\delta _{\ell} (H)  >  \delta (n,k,\ell)$, Theorem~\ref{extthm} provides a perfect matching in $H$. In the latter case, set $c^* := c^*_{k, \ell'}$, $H':=H[V(H)\setminus V(M)]$ and $n_1:=|V(H')|$.
Since $|V(M)| \le \xi n$,
\[
\delta _{\ell'} (H') \geq \delta _{\ell'} (H) - |V(M)|\binom{n - \ell' - 1}{k-\ell'-1} \geq (c^* + \th - \xi k) \binom{n- \ell'}{k- \ell'} > (c^*  /(k-\ell ')! +2\xi ^2 )n_1 ^{k- \ell'},
\]
where the last inequality follows since $\xi \ll \theta , 1/k$.
Let $c:= c^*/(k- \ell')! + \xi ^2$. By the definition of $c^*$,  for sufficiently large $\tilde{n}$, every $k$-uniform hypergraph $F$ on $\tilde{n}$ vertices with $\delta_{\ell'}(F)\ge c \tilde{n} ^{k- \ell'}$ contains a perfect fractional matching.
Applying Lemma~\ref{lem:frac} with $ \xi ^2/k$ and $(1 - \xi ^2)/k$ playing the roles of $\eps$ and $b$ respectively, we conclude that $H'$ contains a matching $M'$ of size at least $(1- \xi^2) n_1/k$. Let $W$ be the uncovered vertices  of $H'$. Then $|W|\le \xi^2 n$. We finally absorb $W$ using the absorbing property of $M$.
\endproof

\subsection{Proof of Theorem~\ref{thm:abs} }
The proof of  Theorem~\ref{thm:abs} follows from the following three lemmas.
Lemma~\ref{lo} is a special case of \cite[Lemma~1.1]{lo} and  gives a sufficient condition for a hypergraph $H$ to contain a small matching that absorbs \emph{any} much smaller set of vertices from $H$.

\begin{lemma}\cite{lo} \label{lo}
Let $k\in \mathbb N$ and  $\gamma ' >0$.
Then there exists an $n_0 \in \mathbb N$ such that the following holds. Suppose that $H$ is a $k$-uniform hypergraph
on $n \geq n_0$ vertices so that, for any $x,y \in V(H)$, there are at least $\gamma ' n^{2k-1}$ $(2k-1)$-sets $X \subseteq V(H)$ such that both $H[X \cup \{x\}]$ and $H[X \cup \{y\}]$ contain perfect matchings.
Then $H$ contains a matching $M$ so that
\begin{itemize}
\item $|M|\leq (\gamma '/2)^k n/(8k^2(k-1))$;
\item $M$ absorbs any $W \subseteq V(G) \setminus M$ such that $|W| \in k \mathbb N$ and  $|W|\leq (\gamma '/2)^{2k} n/(128k(k-1)^2) $.
\end{itemize}
\end{lemma}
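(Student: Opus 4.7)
The plan is the probabilistic absorbing method of Lo. For each ordered pair $(x,y) \in V(H) \times V(H)$, let $\mathcal{A}(x,y)$ be the family of $(2k-1)$-sets $X \subseteq V(H) \setminus \{x,y\}$ for which both $H[X \cup \{x\}]$ and $H[X \cup \{y\}]$ contain perfect matchings; by hypothesis $|\mathcal{A}(x,y)| \geq \gamma' n^{2k-1} - O(n^{2k-2})$. Each $X \in \mathcal{A}(x,y)$ is a \emph{swap gadget}: if the global matching $M$ contains the two edges of a perfect matching of $X \cup \{x\}$, they can be interchanged with the two edges of a perfect matching of $X \cup \{y\}$, thereby ejecting $x$ in favour of $y$.

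Next, I would sample a random family $\mathcal{F}$ of $(2k-1)$-subsets of $V(H)$ by including each such subset independently with probability $p := c \, n^{-(2k-2)}$, where $c = c(k, \gamma') > 0$ is a suitably small constant. Elementary first-moment computations give $\mathbb{E}|\mathcal{F}| = \Theta(n)$ and $\mathbb{E}|\mathcal{F} \cap \mathcal{A}(x,y)| = \Theta(n)$ for every pair $(x,y)$, while the expected number of vertex-intersecting pairs of sets in $\mathcal{F}$ is $o(n)$. A Chernoff bound together with a union bound over all $O(n^2)$ pairs shows that with positive probability $\mathcal{F}$ satisfies all three concentration estimates simultaneously. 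Deleting one set from each intersecting pair yields a pairwise vertex-disjoint family $\mathcal{F}'$ with $|\mathcal{F}'| = O(n)$ and $|\mathcal{F}' \cap \mathcal{A}(x,y)| = \Omega(n)$ for every pair.

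Finally, I would construct the absorbing matching $M$ as follows: for each $X \in \mathcal{F}'$, greedily pick a partner $v_X \in V(H) \setminus V(\mathcal{F}')$ not previously chosen with $X \in \mathcal{A}(v_X, v_X)$, and include the perfect matching of $H[X \cup \{v_X\}]$ into $M$. The greedy selection succeeds because each $X$ has $\Omega(n)$ valid partners. To absorb $W \subseteq V(H) \setminus V(M)$ with $|W| \in k\mathbb{N}$ and $|W|$ much smaller than $|\mathcal{F}'|$, iteratively choose, for each $w \in W$, an unused $X \in \mathcal{F}'$ with $X \in \mathcal{A}(v_X, w)$, and swap the two edges of $H[X \cup \{v_X\}]$ in $M$ for the two edges of a perfect matching of $H[X \cup \{w\}]$. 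The main obstacle is organising these swaps into short chains so that every released partner $v_X$ is subsequently re-covered, yielding a genuine perfect matching on $V(M) \cup W$ at the end; the key enabler is the abundance $|\mathcal{F}' \cap \mathcal{A}(u,v)| = \Omega(n) \gg |W|$ available for every pair, which leaves ample slack to close each chain. The specific constants $(\gamma'/2)^k/(8k^2(k-1))$ and $(\gamma'/2)^{2k}/(128k(k-1)^2)$ in the statement ultimately reflect the gadget size $2k$, the two-layer random-sampling/cleanup budget, and the slack needed for chain completion; pushing these through amounts to careful but routine bookkeeping.
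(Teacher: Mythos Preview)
The paper does not give its own proof of this lemma; it is quoted as a special case of \cite[Lemma~1.1]{lo}. So there is no in-paper argument to compare against, but your sketch does not reproduce the Lo--Markstr\"om argument, and the step you yourself flag as ``the main obstacle'' is a genuine gap, not routine bookkeeping.

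In your scheme each sampled $(2k-1)$-set $X$ is paired with an external partner $v_X$, and $M$ is the union of perfect matchings of the $2k$-sets $X\cup\{v_X\}$. A swap that absorbs some $w\in W$ necessarily ejects the partner $v_X$. Since every partner lies in $V(M)$ while $W$ is disjoint from $V(M)$, no sequence of such single-vertex swaps can ever balance: after processing all of $W$ you have released exactly $|W|$ partners that still need to be covered, and another round of swaps merely releases $|W|$ new partners. The abundance $|\mathcal F'\cap\mathcal A(u,v)|=\Omega(n)$ lets you choose which vertex to release, but it does not make this conservation obstruction disappear; there is no mechanism in your outline by which a chain terminates.

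The device in \cite{lo} that you are missing is to first upgrade the pairwise $(2k-1)$-set hypothesis to absorbers for \emph{$k$-sets}. Given $T=\{v_1,\dots,v_k\}$, pick an edge $e=\{u_1,\dots,u_k\}$ disjoint from $T$ and then, for each $i$, a set $X_i\in\mathcal A(u_i,v_i)$ disjoint from everything chosen so far. Setting $A:=e\cup X_1\cup\dots\cup X_k$ (so $|A|=2k^2$), the matchings of the $H[X_i\cup\{u_i\}]$ give a perfect matching of $H[A]$, while $e$ together with the matchings of the $H[X_i\cup\{v_i\}]$ give a perfect matching of $H[A\cup T]$. A counting argument shows every $k$-set $T$ has $\Omega(n^{2k^2})$ such absorbers. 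One then runs exactly your random-sampling and cleanup on these $2k^2$-sets: each retained $A$ already carries its own perfect matching (no external partner), and $W$ is absorbed by partitioning it into $k$-sets and spending one absorber per $k$-set, with nothing released and hence no chains. The constants in the statement fall out of this construction.
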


\begin{lemma}\label{absorbing}
Let $k\in \mathbb N$ and  $0< \gamma ' \ll \gamma \ll 1/k$. There exists an $n_0 \in \mathbb N$ such that the following holds.
Let $H=(V,E)$ be a $k$-uniform hypergraph on $n \geq n_0$ vertices. Suppose that for every $x,y \in V$
at least one of the following conditions holds.
\begin{itemize}
\item[(i)] $|N_H (x) \cap N_H (y) | \geq \gamma n^{k-1}$;
\item[(ii)] There exists at least $\gamma n$ vertices $z \in V$ such that
$|N_H(x) \cap N_H(z)| \geq \gamma n^{k-1}$ and $|N_H(y) \cap N_H(z)| \geq \gamma n^{k-1}$.
\end{itemize}
There there are at least $\gamma ' n^{2k-1}$ $(2k-1)$-sets $X \subseteq V$ such that both $H[X \cup \{x\}]$ and $H[X \cup \{y\}]$ contain perfect matchings.
\end{lemma}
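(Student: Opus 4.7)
The plan is to construct the desired $(2k-1)$-sets $X$ explicitly from the local neighborhood structure around $x$ and $y$, treating the two hypotheses separately, and then verify that the count is at least $\gamma' n^{2k-1}$ for some $\gamma' \ll \gamma$ by standard overcounting.

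\textbf{Case (i).} If $|N_H(x) \cap N_H(y)| \geq \gamma n^{k-1}$, I pick a $(k-1)$-set $S \in N_H(x) \cap N_H(y)$ together with any edge $T \in E(H)$ disjoint from $S \cup \{x, y\}$, and set $X := S \cup T$. Then $|X| = 2k-1$, and $\{S \cup \{x\}, T\}$ is a perfect matching of $H[X \cup \{x\}]$ while $\{S \cup \{y\}, T\}$ is a perfect matching of $H[X \cup \{y\}]$. To count, first observe that applying the hypothesis to each pair $\{v, v'\}$ forces $|N_H(v)| \geq \gamma n^{k-1}$ for every $v \in V(H)$ (in both cases (i) and (ii)), whence $e(H) \geq \gamma n^k/k$. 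Thus, for each fixed $S$, there are at least $\gamma n^k/k - O(n^{k-1})$ valid choices of $T$, giving at least $\gamma^2 n^{2k-1}/(2k)$ pairs $(S, T)$, hence at least $\gamma^2 n^{2k-1}/(2k\binom{2k-1}{k-1})$ distinct sets $X$.

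\textbf{Case (ii).} If there are at least $\gamma n$ vertices $z$ as in the hypothesis, I pick such a $z$, then a $(k-1)$-set $S \in N_H(x) \cap N_H(z)$ and a $(k-1)$-set $T \in N_H(y) \cap N_H(z)$ with $S$, $T$, $\{x, y, z\}$ pairwise disjoint, and set $X := S \cup T \cup \{z\}$. Then $|X| = 2k-1$, and the key point is that $H[X \cup \{x\}]$ contains the matching $\{S \cup \{x\}, T \cup \{z\}\}$ (an edge since $T \in N_H(z)$), while $H[X \cup \{y\}]$ contains the matching $\{S \cup \{z\}, T \cup \{y\}\}$ (an edge since $S \in N_H(z)$). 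After subtracting $O(n^{k-2})$ choices for $S$ and for $T$ that violate disjointness, the number of triples $(z, S, T)$ is at least $\gamma n \cdot (\gamma n^{k-1}/2)^2$. Each distinct $X$ is produced by at most $O_k(1)$ such triples (the roles of $z$, $S$, $T$ inside $X$ are essentially determined up to a bounded choice), so we again obtain at least $c_k \gamma^3 n^{2k-1}$ distinct sets $X$ for a constant $c_k > 0$ depending only on $k$.

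Fixing $\gamma' > 0$ with $\gamma' < \min\{\gamma^2/(2k \binom{2k-1}{k-1}),\, c_k \gamma^3\}$ then delivers the conclusion for every pair $(x,y)$. The only delicate point is bookkeeping: verifying that the error terms from enforcing disjointness and from counting each $X$ by multiple tuples are of strictly lower order than the main terms, which is routine once $n$ is large compared to $k$ and $1/\gamma$. There is no serious combinatorial obstacle here; the content of the lemma lies in identifying the right two absorbing templates ($S \cup T$ in Case (i), and the $3$-piece structure $S \cup T \cup \{z\}$ using the bridge vertex $z$ in Case (ii)), after which everything is counting.
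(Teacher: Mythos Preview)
Your proof is correct and follows essentially the same approach as the paper's proof: the same two absorbing templates (a common neighbor $S$ plus an arbitrary disjoint edge $T$ in Case~(i); the bridge vertex $z$ with $S\in N_H(x)\cap N_H(z)$ and $T\in N_H(y)\cap N_H(z)$ in Case~(ii)), the same observation that the hypotheses force $\delta_1(H)\ge \gamma n^{k-1}$ and hence $e(H)\ge \gamma n^k/k$, and the same overcounting by $\binom{2k-1}{k-1}$ or $(2k-1)\binom{2k-2}{k-1}$. The only cosmetic difference is notation ($S,T$ versus $X',X''$).
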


\begin{lemma} \label{lem:main}
Let $k\in \mathbb N$ and  $0< \a \ll \gamma \ll \eps , 1/k$.
Then there exists an $n_0 \in \mathbb N$ such that the following holds. Let $H=(V, E)$ be a $k$-uniform hypergraph on $n \geq n_0$ vertices such that $\delta_1(H)\ge (1/2 - \a) \binom{n-1}{k-1}$.
Suppose that there exists $x_0, y_0\in V$ such that
\begin{itemize}
\item[(i)] $|N_H (x_0) \cap N_H (y_0) | < \gamma n^{k-1}$;
\item[(ii)] at most $\gamma n$ vertices $z \in V$ satisfy $|N_H(z) \cap N_H(x_0)| \geq \gamma n^{k-1}$ and $|N_H(z) \cap N_H(y_0)| \geq \gamma n^{k-1}$.
\end{itemize}
Then $H$ is $\eps$-close to $\mathcal B_{n,k}$ or $\overline{\mathcal B}_{n,k}$.
\end{lemma}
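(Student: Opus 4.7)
The plan is to identify a partition $V = A' \cup B' \cup C$ (with $|C|$ small, $x_0 \in A'$, $y_0 \in B'$), establish a parity structure relating $N_H(y_0)$ to $|T \cap A'|$, and then rebalance $A'$, $B'$ to obtain the required closeness. Set $X := N_H(y_0)$ and $Y := N_H(x_0)$. Hypothesis (i), the lower bound $\delta_1(H) \ge (1/2 - \alpha)\binom{n-1}{k-1}$, and inclusion--exclusion give $|X|, |Y| = (1/2 \pm O(\gamma))\binom{n-1}{k-1}$ and $|\overline{X \cup Y}| = O(\alpha + \gamma)\binom{n-1}{k-1}$. Define
\[
A' := \{z \in V : |N_H(z) \cap X| < \gamma n^{k-1}\}, \ \ B' := \{z \in V : |N_H(z) \cap Y| < \gamma n^{k-1}\}, \ \ C := V \setminus (A' \cup B'),
\]
so $|C| \le \gamma n$ by (ii), while a degree check (a vertex of $A' \cap B'$ would have its $N_H$ confined to the tiny set $\overline{X \cup Y}$, contradicting the minimum degree) forces $A' \cap B' = \emptyset$. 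Consequently $|N_H(z) \cap Y| = (1/2 \pm O(\alpha + \gamma))\binom{n-1}{k-1}$ for each $z \in A'$, symmetrically for $B'$; note $x_0 \in A'$ and $y_0 \in B'$.

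The key structural observation is that the definition of $A'$ bounds the number of pairs $(z, e)$ with $z \in A' \cap e$ and $e \setminus \{z\} \in X$ by $|A'| \cdot \gamma n^{k-1} = O(\gamma n^k)$, and symmetrically for $B'$ and $Y$. Hence all but $O(\gamma n^k)$ edges $e$ of $H$ (call them \emph{good}) satisfy, for every $v \in e \cap (A' \cup B')$,
\[
v \in A' \iff e \setminus \{v\} \in Y, \qquad v \in B' \iff e \setminus \{v\} \in X.
\]
In particular, if $e = S \cup \{v_1, v_2\}$ is a good edge with $|S| = k-2$, then the two $(k-1)$-subsets $T_i = e \setminus \{v_{3-i}\}$ satisfy $T_i \in X \iff v_{3-i} \in B'$, so $T_1$ and $T_2$ have the same $X/Y$-status iff $v_1, v_2$ lie on the same side of the $A'/B'$ partition.

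To globalise this local ``swap rule'', define $f(T) := [T \in X]$ and $g(T) := |T \cap A'| \bmod 2$ on $\binom{V}{k-1}$. The rule says $f(T_1) + f(T_2) \equiv g(T_1) + g(T_2) \pmod 2$ whenever $T_1, T_2$ differ in a single element contained in a good edge. Since the Johnson graph $J(n, k-1)$ is well-connected by single-element swaps, and an averaging argument (using the large codegrees $d_{A'}(T) = (1 - O(\sqrt{\gamma}))|A'|$ for typical $T \in Y$, deduced from $\sum_{z \in A'}|N_H(z)\cap Y| = (1-O(\gamma))|A'||Y|$, and symmetrically for $X$) allows one to connect all but $O(\sqrt{\gamma}\, n^{k-1})$ targets $T$ to a fixed reference $T^*$ by chains of good-edge swaps, we conclude that there exists $\epsilon \in \{0,1\}$ with $T \in X \iff |T \cap A'| \equiv \epsilon \pmod 2$ for all but $O(\sqrt{\gamma}\, n^{k-1})$ sets $T$. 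Therefore all but $O(\sqrt{\gamma}\, n^k)$ edges of $H$ satisfy $|e \cap A'| \equiv \epsilon \pmod 2$, placing $H$ within $O(\sqrt{\gamma}\, n^k)$ edges of $\dB(A', B')$ (if $\epsilon = 1$) or $\doB(A', B')$ (if $\epsilon = 0$).

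Finally, the identity $|Y| = (1 + o(1)) \sum_{j \equiv \epsilon + 1} \binom{|A'|}{j} \binom{|B'|}{k-1-j}$, combined with $|Y| = (1/2 \pm O(\gamma))\binom{n-1}{k-1}$ and the quadratic deviation of this parity-weighted sum from $\binom{n-1}{k-1}/2$ as $|A'|$ strays from $n/2$, forces $\bigl||A'| - n/2\bigr| = O(\sqrt{\gamma}\,n)$. Moving the $O(\sqrt{\gamma}\,n) + |C|$ vertices required to rebalance to $|A| = \lfloor n/2 \rfloor$, $|B| = \lceil n/2 \rceil$ changes at most $O(\sqrt{\gamma}\,n \cdot \binom{n-1}{k-1}) \le \eps n^k / 2$ edges in the symmetric difference, so $H$ is $\eps$-close to $\dB$ or $\doB$ since $\gamma \ll \eps$. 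The main obstacle is the parity propagation: the exceptional $O(\gamma n^{k-1})$ sets must be avoided at every rung of the swap chain, so one needs a clean path-finding argument showing that short chains of good-edge swaps exist between almost any pair of $(k-1)$-sets.
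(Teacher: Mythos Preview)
Your partition $A', B', C$ and the ``good edge'' observation are essentially the same as the paper's Claim~\ref{clm:vv'}: in the paper's notation $X = A'$, $Y = B'$, and parts (iii)--(iv) of that claim encode exactly that neighbourhoods within a class agree and across classes are complementary. Where you diverge is in the structural step. The paper does \emph{not} attempt a Johnson-graph connectivity argument; instead it applies Lemma~\ref{lem:KK} to $H[X]$ (which is legitimate because any two vertices of $X$ have nearly identical neighbourhoods) to force the dichotomy $e(H[X])\le \gamma_0\binom{|X|}{k}$ or $e(H[X])\ge (1-\gamma_0)\binom{|X|}{k}$, and then runs a short induction on $i$ (Claim~\ref{clm:dXY}) showing that $X^{k-i}Y^{i-1}$-sets are almost all neighbours or almost all non-neighbours of every $y\in Y$, with the status alternating in $i$. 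This gives the parity structure directly, with no path-finding.

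Your route has two genuine gaps. First, you never establish that \emph{both} $|A'|$ and $|B'|$ are at least a constant fraction of $n$; you only know $|C|\le\gamma n$, so a priori one side could be tiny. The paper needs this too (Claim~\ref{clm:XY}) and gets it from the density dichotomy on $H[X]$; without that dichotomy you have no mechanism, and if, say, $|B'|=o(n)$ then for $T\in X$ there are essentially no good-edge extensions via $B'$, so your swap graph has no edges out of half its vertices. Second, and more seriously, the parity-propagation step is not a proof but a hope: you need that the auxiliary graph on $\binom{V}{k-1}$ (with $T\sim T'$ when $T\cup T'$ is a good edge) has a component missing only $O(\sqrt\gamma\,n^{k-1})$ vertices. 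High degree does not imply this --- two disjoint dense pieces would satisfy your codegree estimates --- and you have not ruled that out. You yourself flag this as ``the main obstacle,'' which is accurate: as written, nothing prevents $\phi := f+g \pmod 2$ from being $0$ on one large component and $1$ on another. (A minor point: the deviation of the parity-weighted sum from $\tfrac12\binom{n-1}{k-1}$ is of order $(2c-1)^{k-1}$, not quadratic, so the balance you obtain is $|c-\tfrac12|=O(\gamma_k^{1/(k-1)})$; this is still enough since $\gamma\ll\eps,1/k$, but the exponent matters.)

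The paper's approach sidesteps both problems at once: Lemma~\ref{lem:KK} plus the induction in Claim~\ref{clm:dXY} replaces your global connectivity argument by a local alternation that only ever compares a single vertex's neighbourhood to an averaged one, and the dichotomy it produces is also what yields the lower bound on $\min(|X|,|Y|)$.
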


We postpone the proof of Lemmas~\ref{absorbing} and \ref{lem:main} and prove Theorem~\ref{thm:abs} first.

\smallskip

{\noindent \bf Proof of Theorem~\ref{thm:abs}.} Given $\eps >0$ and $k \geq 2$, choose constants $\alpha , \gamma ', \gamma$ so that
$0<\alpha \ll \gamma ' \ll \gamma \ll \eps, 1/k$. Set $\xi := (\gamma '/2)^k / \sqrt{128k(k-1)^2}$. Let $n$ be sufficiently large and $H$ be a $k$-uniform hypergraph as in the statement of the theorem.

By Lemmas~\ref{absorbing} and~\ref{lem:main},  $H$ is $\eps$-close to $\mathcal B_{n,k}$ or $\overline{\mathcal B}_{n,k}$ or for every $x, y\in V(H)$ there are at least $\gamma ' n^{2k-1}$ $(2k-1)$-sets $X \subseteq V(H)$ such that both $H[X \cup \{x\}]$ and $H[X \cup \{y\}]$ contain perfect matchings. In the former case we are done. In the latter case, Lemma~\ref{lo} implies that $H$ contains a matching $M$ so that
\begin{itemize}
\item $|M|\leq (\gamma '/2)^k n/(8k^2(k-1)) \leq\xi n/k$;
\item $M$ absorbs any $W \subseteq V(H) \setminus M$ such that $|W| \in k \mathbb N$ and  $|W|\leq  (\gamma '/2)^{2k} n/(128k(k-1)^2) =\xi ^2 n $,
\end{itemize}
as required.
\endproof

\noindent
{\bf Proof of Lemma~\ref{absorbing}.}
Note that (i) and (ii) imply that $\delta _1 (H) \geq \gamma n^{k-1}$ and
so $e(H) \geq \gamma n^k/k$.
Consider any $x,y \in V$. First assume that (i) holds. Fix any $X' \subseteq V$ where $|X'|=k-1$ and $X' \cup \{x\}, X' \cup \{y\} \in E$. By (i) there are at least $\gamma n^{k-1}$ choices for $X'$.
Next choose some $X'' \subseteq V\setminus (X' \cup \{x,y\})$ such that $|X''|=k$ and $X''\in E$. There are at least $\gamma n^k/k -(k+1) \binom{n}{k-1} \geq \gamma n^k /(2k)$ choices for $X''$.
Set $X:=X' \cup X''$. Note that both $H[X \cup \{x\}]$ and $H[X \cup \{y\}]$ contain perfect matchings. Further, since there are at least
$\gamma n^{k-1}$ choices for $X'$, at least $ \gamma n^k /(2k)$ choices for $X''$ and each $(2k-1)$-set may be counted at most $\binom{2k-1}{k-1}$  times, there are at least
\[
\gamma n^{k-1} \times \frac{\gamma n^k }{2k} \times \frac{1}{ \binom{2k-1}{k-1}} > \gamma ' n^{2k-1}
\]
choices for $X$ (as $\gamma ' \ll \gamma \ll 1/k$), as desired.

Now suppose that (ii) holds. Fix any $z \in V$ such that $|N_H(x) \cap N_H(z)| \geq \gamma n^{k-1}$ and $|N_H(y) \cap N_H(z)| \geq \gamma n^{k-1}$. There are at least $\gamma n $ choices for $z$. Next fix some $X' \in N_H(x) \cap N_H(z)$ that is disjoint from $y$. There are at least
 $\gamma n^{k-1}-\binom{n}{k-2} \geq \gamma n^{k-1}/2$ choices for $X'$. Finally, fix some $X'' \in N_H(y) \cap N_H(z)$ so that $X''$ is disjoint from $X' \cup \{x\}$.
 There are at least $\gamma n^{k-1} -k \binom{n}{k-2} \geq \gamma n^{k-1} /2$ choices for $X''$. Set $X:=X' \cup X''\cup \{z\}$.
 So $|X|=2k-1$ and both $H[X \cup \{x\}]$ and $H[X \cup \{y\}]$ contain perfect matchings.
 Further, there are at least
 $$\gamma n \times \frac{\gamma n^{k-1}}{2} \times \frac{\gamma n^{k-1}}{2} \times \frac{1}{(2k-1)\binom{2k-2}{k-1}} > \gamma ' n^{2k-1}$$ choices for $X$ (as $\gamma ' \ll \gamma \ll 1/k$), as desired.
\endproof

The rest of this section is devoted to the proof of Lemma~\ref{lem:main}. We draw on ideas used in the proof of Lemma 5.4 in~\cite{zhao2}. We need two results from \cite{zhao2}.  The first one implies that if any two vertices in a hypergraph have roughly the same neighborhood, then the hypergraph is near complete or empty.

\begin{lemma} \cite[Lemma 2.2]{zhao2}
\label{lem:KK}
Given any $k \in \mathbb N$ and $\rho >0$ there exists an $n_0 \in \mathbb N$ such that the following holds.
Let $F=(V, E)$ be a $k$-uniform hypergraph on $n \geq n_0$ vertices with edge density $|E|/\binom{n}{k}\in [\rho, 1 - \rho]$.
Then there exist two vertices $v, v'\in V$ such that $| N_F(v)\triangle N_F(v') | \ge \rho (1- \rho) n^{k-1}/(k+1)!$.
\end{lemma}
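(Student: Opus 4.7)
The plan is to express the target sum as an edge cut in the Johnson graph $J(n,k)$ and lower-bound it via the expander mixing lemma. Concretely, for each $(k-1)$-subset $S\subseteq V$ let $d_S:=|\{v\in V\setminus S:\, S\cup\{v\}\in E\}|$ and set $m:=n-k+1$. Classifying the triples $(\{v,v'\},S)$ according to whether $\{v,v'\}\subseteq V\setminus S$, a double-count yields
\begin{equation*}
\sum_{\{v,v'\}\in\binom{V}{2}} |N_F(v)\triangle N_F(v')|\;\ge\;\sum_{S\in\binom{V}{k-1}} d_S(m-d_S),
\end{equation*}
since each triple $(\{v,v'\},S)$ with $v,v'\notin S$ and exactly one of $S\cup\{v\},S\cup\{v'\}$ in $E$ contributes once to each side.

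Next, I would observe that the right-hand side is precisely the size of the edge cut in the Johnson graph $J(n,k)$ between the vertex classes $E$ and $\binom{V}{k}\setminus E$: two $k$-sets are adjacent in $J(n,k)$ iff they share exactly $k-1$ vertices, and each such adjacency corresponds uniquely to a pair $\{v,v'\}\subseteq V\setminus S$ together with the shared $(k-1)$-set $S$. Now $J(n,k)$ is $k(n-k)$-regular on $N=\binom{n}{k}$ vertices, and its second-largest eigenvalue (in absolute value) is $\lambda=(k-1)(n-k-1)-1$, so the spectral gap equals $d-\lambda=n$. Applying the Alon--Chung form of the expander mixing lemma to $A=E$, and using $\rho':=|E|/N\in[\rho,1-\rho]$, gives
\begin{equation*}
\sum_S d_S(m-d_S)\;\ge\;(d-\lambda)\,\rho'(1-\rho')\,N\;=\;n\,\rho'(1-\rho)\binom{n}{k}\;\ge\;n\,\rho(1-\rho)\binom{n}{k},
\end{equation*}
where the last step uses that $x(1-x)$ is minimized over $[\rho,1-\rho]$ at the endpoints.

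Since the maximum of $|N_F(v)\triangle N_F(v')|$ over the $\binom{n}{2}$ pairs is at least the average, it follows that
\begin{equation*}
\max_{\{v,v'\}}|N_F(v)\triangle N_F(v')|\;\ge\;\frac{2\,\rho(1-\rho)\binom{n}{k}}{n-1}\;\ge\;\frac{\rho(1-\rho)\,n^{k-1}}{(k+1)!}
\end{equation*}
for $n$ sufficiently large, the last step being a routine asymptotic comparison (equivalent to $2(k+1)\binom{n}{k}\ge (n-1)n^{k-1}/k!$ for large $n$) with substantial slack.

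The main obstacle is Step 2, namely invoking the second eigenvalue of $J(n,k)$. This is classical (via the Johnson-scheme structure, or by diagonalizing the symmetric-group action on $\binom{[n]}{k}$), but it is the only non-elementary ingredient. A self-contained alternative would be to use the identity $\sum_S d_S(m-d_S)=\sum_T j_T(k+1-j_T)$, where $T$ ranges over $(k+1)$-subsets of $V$ and $j_T$ denotes the number of $k$-subsets of $T$ in $E$, and then to bound the number of \emph{mixed} $(k+1)$-sets (those with $1\le j_T\le k$) directly by a propagation argument: if very few $T$ were mixed then almost every $(k+1)$-set would be either ``full'' or ``empty'', and a connectivity/shifting argument on the Johnson graph would force $F$ to be nearly complete or nearly empty, contradicting density in $[\rho,1-\rho]$. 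The spectral route is preferred because of its clean, explicit constant.
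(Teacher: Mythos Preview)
The paper does not actually prove this lemma; it is quoted verbatim from \cite{zhao2} and used as a black box, so there is no in-paper proof to compare against. (The label \texttt{lem:KK} suggests that the argument in \cite{zhao2} goes through the Kruskal--Katona theorem rather than through spectra.)

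That said, your spectral argument is correct and pleasantly sharp. The double count in Step~1 is fine as a lower bound (you discard the contribution from $(k-1)$-sets $S$ meeting $\{v,v'\}$), and the identification of $\sum_S d_S(m-d_S)$ with the edge cut $e(E,\overline E)$ in $J(n,k)$ is exact. Two small points worth tightening: (i) for the cut lower bound $e(A,\bar A)\ge (d-\lambda)\,|A|\,|\bar A|/N$ the relevant $\lambda$ is the \emph{second-largest} eigenvalue, not the second-largest in absolute value---for $J(n,k)$ with $n$ large these coincide since $\lambda_1=(k-1)(n-k-1)-1>k=|\lambda_k|$, but your phrasing should be adjusted; (ii) there is a typo $\rho'(1-\rho)$ where you mean $\rho'(1-\rho')$. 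The final asymptotic comparison $\tfrac{2}{n-1}\binom{n}{k}\ge \tfrac{n^{k-1}}{(k+1)!}$ is immediate for large $n$ since the left side is asymptotically $\tfrac{2}{k!}n^{k-1}$.

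Compared with a Kruskal--Katona-style proof, your route trades an elementary shadow/shifting argument for one classical fact (the spectrum of the Johnson scheme), and in return obtains a clean constant with essentially no case analysis. Your fallback ``mixed $(k{+}1)$-set'' idea is also sound and closer in spirit to what a self-contained combinatorial proof in \cite{zhao2} would look like.
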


\begin{prop}\cite[Proposition 2.3]{zhao2}
\label{evensum}
For $r \in \mathbb N$, $0\le c\le 1$ and $n\to \infty$,
\begin{align*}
\sum_{0\le i\le r, \, i \, \rm{even}} \binom{cn}{r- i} \binom{ (1-c)n }{i} & = \frac{n^r}{2r!} ( 1+ (2c-1)^r ) - O(n^{r-1}), \\
\sum_{0\le i\le r, \, i \,\rm{odd}} \binom{cn}{r- i} \binom{ (1-c)n }{i} & = \frac{n^r}{2r!} ( 1- (2c-1)^r ) - O(n^{r-1}).
\end{align*}
\end{prop}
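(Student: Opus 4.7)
The plan is to reduce both identities to elementary asymptotics of a single product of binomial coefficients, followed by one application of the binomial theorem. For fixed $r$, I will first expand each factor as a falling-factorial polynomial in $n$: writing
\[
\binom{cn}{r-i} = \frac{(cn)^{r-i}}{(r-i)!} + O(n^{r-i-1}),\qquad \binom{(1-c)n}{i} = \frac{((1-c)n)^{i}}{i!} + O(n^{i-1}),
\]
(where the constants in the $O$ depend only on $r$), and multiplying these two estimates, I obtain
\[
\binom{cn}{r-i}\binom{(1-c)n}{i} = \binom{r}{i}\,\frac{c^{\,r-i}(1-c)^{i}\, n^{r}}{r!} + O(n^{r-1}).
\]
The error term absorbs the cross products because each suppressed contribution has at most $r-1$ factors of $n$.

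Next, I will sum this asymptotic expression over $i$ with a fixed parity. Since the sum has only $r+1$ terms and $r$ is fixed, the cumulative error remains $O(n^{r-1})$, so
\[
\sum_{\substack{0\le i\le r\\ i\text{ even}}}\binom{cn}{r-i}\binom{(1-c)n}{i} = \frac{n^{r}}{r!}\sum_{\substack{0\le i\le r\\ i\text{ even}}}\binom{r}{i}c^{\,r-i}(1-c)^{i} + O(n^{r-1}),
\]
and analogously for the odd sum. The combinatorial identities to finish the proof come from the binomial theorem applied to $(c+(1-c))^{r}$ and $(c-(1-c))^{r}=(2c-1)^{r}$. Adding and subtracting these two expansions yields
\[
\sum_{\substack{i\text{ even}}}\binom{r}{i}c^{\,r-i}(1-c)^{i} = \tfrac{1}{2}\bigl(1+(2c-1)^{r}\bigr),\qquad \sum_{\substack{i\text{ odd}}}\binom{r}{i}c^{\,r-i}(1-c)^{i} = \tfrac{1}{2}\bigl(1-(2c-1)^{r}\bigr),
\]
and substituting these back produces exactly the two claimed asymptotic formulas.

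There is essentially no obstacle here beyond bookkeeping: the only subtlety is confirming that when $cn$ or $(1-c)n$ is not an integer, the symbol $\binom{cn}{r-i}$ is interpreted via the falling factorial $(cn)(cn-1)\cdots(cn-r+i+1)/(r-i)!$, so that the expansion above is literally a polynomial identity in $n$ with coefficients depending on $c$ and $r$. Since $r$ is fixed and $c\in[0,1]$, the implicit constants in all $O(n^{r-1})$ terms depend only on $r$, which matches the asymptotic statement. This completes the proof sketch.
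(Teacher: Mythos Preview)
Your proof is correct. Note, however, that the paper does not give its own proof of this proposition: it is quoted from \cite{zhao2} and used as a black box. Your argument---expand each binomial coefficient as a polynomial in $n$ via the falling factorial, keep only the leading term, and then evaluate the parity-restricted sums $\sum_{i\text{ even/odd}}\binom{r}{i}c^{r-i}(1-c)^{i}$ by adding and subtracting the binomial expansions of $(c+(1-c))^{r}=1$ and $(c-(1-c))^{r}=(2c-1)^{r}$---is exactly the standard route and matches what one would expect the original proof in \cite{zhao2} to do. The bookkeeping is sound: since $r$ is fixed, the finitely many $O(n^{r-1})$ errors combine into a single $O(n^{r-1})$, and your remark about interpreting $\binom{cn}{j}$ via falling factorials when $cn\notin\mathbb{N}$ is the right clarification.
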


\smallskip

\noindent
{\bf Proof of  Lemma~\ref{lem:main}.}
Define
\[
X := \{ v\in V: |N_H(y_0) \cap N_H(v)| < \gamma n^{k-1} \} \quad \text{and} \quad
Y := \{v\in V: |N_H(x_0) \cap N_H(v)| < \gamma n^{k-1} \}.
\]
Then by Lemma~\ref{lem:main} (i), $x_0\in X$ and $y_0\in Y$. Let $V_0:= V\setminus (X\cup Y)$. We have $|V_0| \le \gamma n$ by Lemma~\ref{lem:main} (ii).
Roughly speaking, our goal is to show that $|X| \approx |Y| \approx n/2$ and $H \approx \dB(X, Y)$ or $H \approx \doB(X, Y)$.

We first provide several properties of $X$ and $Y$, for example, $X\cap Y = \emptyset$, and
$N_H(v)\approx N_H(v')$ whenever $v, v'\in X$ or $v, v'\in Y$.
\begin{claim}\label{clm:vv'} The following conditions hold.
\begin{enumerate}[{\rm (i)}]
\item For all $v\in X\cup Y$, we have $d_H(v) \le (1/2 + \a) \binom{n-1}{k-1} + \g n^{k-1}$.
\item $X\cap Y = \emptyset$.
\item For any two vertices $x, x' \in X$, we have $|N_H(x) \triangle N_H(x')| < 5\g n^{k-1}$. The same holds for all $y, y' \in Y$.
\item For any $x\in X$ and $y\in Y$, we have $|N_H(x) \cap N_H(y)| \le 4\g n^{k-1}$ and $|\overline{N}_H(x) \cap \overline{N}_H(y)| \le 4\g n^{k-1}$, where $\overline{N}_H(x) := \binom{V\setminus \{x\} }{ k-1}\setminus N_H(x)$ consists of \emph{non-neighbors} of $x$.
\end{enumerate}
\end{claim}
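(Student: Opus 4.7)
My overall plan is to exploit the near-partition of $\binom{V}{k-1}$ that the hypotheses of Lemma~\ref{lem:main} force: the bound $\delta_1(H)\ge(1/2-\alpha)\binom{n-1}{k-1}$ together with $|N_H(x_0)\cap N_H(y_0)|<\gamma n^{k-1}$ implies that $N_H(x_0)\cup N_H(y_0)$ covers all but a $(2\alpha+o(1))$-fraction of the $(k-1)$-subsets of $V$. This one picture drives every part of the claim, and the only real work is bookkeeping the various error terms against the slack constants $5\gamma$ and $4\gamma$.

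For (i), I would apply inclusion--exclusion to $v\in X$ and $y_0$: from $|N_H(v)\cup N_H(y_0)|\le\binom{n}{k-1}$ and $|N_H(v)\cap N_H(y_0)|<\gamma n^{k-1}$ one gets $d_H(v)\le\binom{n}{k-1}-d_H(y_0)+\gamma n^{k-1}$, and the lower-order term $\binom{n-1}{k-2}$ coming from $\binom{n}{k-1}=\binom{n-1}{k-1}+\binom{n-1}{k-2}$ is absorbed into $\gamma n^{k-1}$ for large $n$; the case $v\in Y$ is identical with $x_0$ in place of $y_0$. For (ii), if a vertex $v$ lay in $X\cap Y$ then $N_H(v)$ would meet $N_H(x_0)\cup N_H(y_0)$ in fewer than $2\gamma n^{k-1}$ sets, while the remainder of $N_H(v)$ would be confined to the small complement of this union (of size $\lesssim 2\alpha\binom{n-1}{k-1}+\gamma n^{k-1}$), forcing $d_H(v)\lesssim 3\gamma n^{k-1}+2\alpha\binom{n-1}{k-1}$ and contradicting the minimum degree once $\alpha\ll\gamma$.

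For (iii), the key observation is that both $N_H(x)$ and $N_H(x')$ are almost disjoint from $N_H(y_0)$, so each sits (up to $\gamma n^{k-1}$ error) inside $\binom{V}{k-1}\setminus N_H(y_0)$, a set of size at most $(1/2+\alpha)\binom{n-1}{k-1}+\binom{n-1}{k-2}$. Since each also occupies at least $(1/2-\alpha)\binom{n-1}{k-1}-\gamma n^{k-1}$ of this region, the complementary slack gives $|N_H(x)\setminus N_H(x')|\le 2\gamma n^{k-1}+2\alpha\binom{n-1}{k-1}+\binom{n-1}{k-2}$, and a symmetric argument handles $|N_H(x')\setminus N_H(x)|$. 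Invoking $\alpha\ll\gamma$ yields the $5\gamma n^{k-1}$ bound; the argument for $Y$ is identical.

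For (iv), I will leverage the one-sided bound from (iii). The inclusion $N_H(x)\cap N_H(y)\subseteq(N_H(x)\cap N_H(y_0))\cup(N_H(y)\setminus N_H(y_0))$, combined with $|N_H(x)\cap N_H(y_0)|<\gamma n^{k-1}$ (since $x\in X$) and the one-sided version of (iii) applied to $y,y_0\in Y$, yields the first bound. For the non-neighborhood estimate I would split $\overline{N}_H(x)\cap\overline{N}_H(y)$ along $\overline{N}_H(y_0)$: the portion inside $\overline{N}_H(y_0)$ lies in the small complement of $N_H(x)\cup N_H(y_0)$, and the portion outside is controlled (up to lower-order terms) by $|N_H(y_0)\setminus N_H(y)|$, again the one-sided (iii). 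Summing gives $|\overline{N}_H(x)\cap\overline{N}_H(y)|\lesssim 3\gamma n^{k-1}+2\alpha\binom{n-1}{k-1}<4\gamma n^{k-1}$. The main obstacle throughout is purely bookkeeping: arranging that all the $O(\alpha\binom{n-1}{k-1})$ and $O(n^{k-2})$ corrections fit into the $\gamma$-slack, which is exactly what the hierarchy $\alpha\ll\gamma\ll 1/k$ and the requirement that $n$ be sufficiently large are designed to accommodate.
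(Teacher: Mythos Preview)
Your proposal is correct and follows essentially the same approach as the paper: both proofs exploit the fact that $N_H(x_0)$ and $N_H(y_0)$ nearly partition $\binom{V}{k-1}$, and both derive (iii) and (iv) by playing off the definitions of $X,Y$ against this near-partition together with the one-sided bound \eqref{eq:difN}. The only noteworthy difference is in the second assertion of (iv): the paper obtains $|\overline{N}_H(x)\cap\overline{N}_H(y)|$ directly from the first assertion via $|N_H(x)\cup N_H(y)|\ge d_H(x)+d_H(y)-|N_H(x)\cap N_H(y)|$ and a single complement step, which is a bit cleaner than your split along $\overline{N}_H(y_0)$, but both routes are equally valid.
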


\begin{proof}
To see (i), suppose $x\in X$ and $y\in Y$. Since $d_H(x_0), d_H(y_0)\ge (1/2 - \a) \binom{n-1}{k-1}$, by the definition of $X$ and $Y$,
\begin{align} \label{eq:Nx0}
|N_H(y_0)\setminus N_H(x)|, |N_H(x_0)\setminus N_H(y)| \ge \left(\frac12 - \a \right) \binom{n-1}{k-1} - \g n^{k-1}.
\end{align}
Consequently, we have $d_H(x), d_H(y) \le (1/2 + \a) \binom{n-1}{k-1} + \g n^{k-1}$.

\smallskip
To see  (ii), suppose that there exists $v\in X\cap Y$. Then by \eqref{eq:Nx0},
\begin{align*}
 |( N_H( x_0)\cup N_H(y_0) ) \setminus N_H(v) | &\ge | N_H(x_0)\setminus N_H(v) | + | N_H(y_0) \setminus N_H(v)| - | N_H(x_0) \cap N_H(y_0) |\\
  &\ge 2 \left(\frac{1}2 - \a \right) \binom{n-1}{k-1} - 3\gamma n^{k-1} \ge \binom{n-1}{k-1} - 4\gamma n^{k-1},
\end{align*}
which implies that $|N_H(v)|\le 4\gamma n^{k-1}$, contradicting the minimum degree condition of $H$.

\smallskip
To see  (iii), consider $x\in X$. By the definition of $X$ and the minimum degree condition of $H$, we have
$| N_H(x) \cup N_H(y_0)| \ge 2 \left(\frac{1}2 - \a \right) \binom{n-1}{k-1} - \gamma n^{k-1}$.
Let $x'\in X\setminus \{x\}$. Then
\begin{align}
| N_H(x') \setminus N_H(x) | & = | (N_H(x') \setminus N_H(x) )\setminus N_H(y_0) ) | + | (N_H(x') \setminus N_H(x)) \cap N_H(y_0)| \nonumber\\
&\le \left | \binom{V\setminus \{x'\} }{k-1}\setminus ( N_H(x) \cup N_H(y_0) ) \right | + | N_H(x')\cap N_H(y_0) | \nonumber\\
&\le \binom{n-1}{k-1} - 2 \left(\frac{1}2 - \a \right) \binom{n-1}{k-1} + \gamma n^{k-1} + \gamma n^{k-1} \nonumber\\
&=  2\a \binom{n-1}{k-1} + 2\gamma n^{k-1}.  \label{eq:difN}
\end{align}
The same bound holds for $|N_H(x) \setminus N_H(x') |$. Hence
\begin{align*}
|N_H(x) \triangle N_H(x')| &= | N_H(x') \setminus N_H(x) | +  |N_H(x) \setminus N_H(x') | \le 2\left( 2\a \binom{n-1}{k-1} + 2\gamma n^{k-1} \right) < 5\g n^{k-1}.
\end{align*}
Analogously we can derive that $|N_H(y) \triangle N_H(y')| < 5\g n^{k-1}$ for any $y, y' \in Y$.

\smallskip
To see  (iv), consider $x\in X$ and $y\in Y$. By \eqref{eq:difN}, we have $|N_H(x)\setminus N_H(x_0)| \le  2\a \binom{n-1}{k-1} + 2\gamma n^{k-1}$. By the definition of $Y$, we have $|N_H(y) \cap N_H(x_0)| < \gamma n^{k-1}$. Thus
\begin{align}\label{eq:comN}
| N_H(x) \cap N_H(y) | \le | N_H(y) \cap N_H(x_0) | + | N_H(x) \setminus N_H(x_0) | \le  2\a \binom{n-1}{k-1} + 3\gamma n^{k-1}  \le 4\g n^{k-1},
\end{align}
which proves the first assertion of  (iv). By the minimum degree condition and \eqref{eq:comN}, we have
\[
|N_H(x) \cup N_H(y)| \ge 2 \left(\frac{1}2 - \a \right) \binom{n-1}{k-1} - 3\g n^{k-1} - 2\a \binom{n-1}{k-1}.
\]
It follows that
\begin{align*}
|\overline{N}_H(x) \cap \overline{N}_H(y)| &\le \binom{|V\setminus \{x, y\}|}{k-1} - | N_H(x) \cup N_H(y) |\\
	&\le \binom{n-2}{k-1} - (1 - 2\a)\binom{n-1}{k-1} + 3\g n^{k-1} +  2\a \binom{n-1}{k-1} \le 4\g n^{k-1},
\end{align*}
which proves the second assertion of  (iv).
\end{proof}

Since $|V_0|\leq \gamma n$ and $X \cap Y =\emptyset$ we have $|X|\geq (1-\gamma)n/2$ or $|Y|\geq (1-\gamma )n/2$. Without loss of generality we may assume that
$|X|\geq (1-\gamma)n/2\ge n/3.$
Let $0< \g_0< 1/2$ such that $\frac{\g_0(1-\g_0)}{ (k+1)! } = 5\g \cdot 3^{k-1}$. We apply Lemma~\ref{lem:KK} to $F= H[X]$ with $\rho = \g_0$.  Since
\[
|N_F(v) \triangle N_F(v')|\leq |N_H(v) \triangle N_H(v')| < 5\g n^{k-1} \le \frac{\g_0(1-\g_0)}{ (k+1)! }|X|^{k-1}
\]
for any $v, v'\in X$ (Claim~\ref{clm:vv'} (iii)), there are two possible cases:
\begin{description}
  \item[Case 1] $e(H[X])  \le \g_0\binom{|X|}{k} $,
  \item[Case 2] $e(H[X]) \ge (1- \g_0)\binom{|X|}{k}$.
\end{description}
In the rest of the proof we assume that one of the two cases holds. Once we have obtained more information we will prove that $|X|$ and $|Y|$ are close to $n/2$. At present we require the following weaker lower bounds on $|X|$ and $|Y|$.

\begin{claim}\label{clm:XY}
$|X|,|Y| \ge (1 - (\frac12 + 2\g_0)^{\frac{1}{k-1}} - \g) n$.
\end{claim}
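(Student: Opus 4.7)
The plan is to establish the upper bound $|X|\le(\tfrac12+2\g_0)^{1/(k-1)}n+o(n)$ in both Case~1 and Case~2, from which the stated lower bound on $|Y|$ follows immediately via the partition identity $n=|X|+|Y|+|V_0|$ (using Claim~\ref{clm:vv'}(ii)) together with $|V_0|\le\g n$. The matching lower bound on $|X|$ needs no additional work beyond the standing assumption $|X|\ge(1-\g)n/2$, since $(\tfrac12+2\g_0)^{1/(k-1)}\ge 1/2$ for every $k\ge 2$ (the two endpoints $k=2$ and $k\to\infty$ are easy to check).

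The first step is to convert the case hypothesis on $e(H[X])$ into a bound on $d_{H[X]}(x)$ that is uniform over $x\in X$. Averaging the identity $\sum_{x\in X}d_{H[X]}(x)=k\,e(H[X])$ produces a witness $x^*\in X$ with $d_{H[X]}(x^*)\le\g_0\binom{|X|-1}{k-1}$ in Case~1 and $d_{H[X]}(x^*)\ge(1-\g_0)\binom{|X|-1}{k-1}$ in Case~2. Claim~\ref{clm:vv'}(iii) then transfers the bound to every $x\in X$ up to additive slack $O(\g n^{k-1})$: this slack is the sum of the $5\g n^{k-1}$ from the claim itself and an $O(n^{k-2})$ contribution accounting for the symmetric difference of the index sets $\binom{X\setminus\{x\}}{k-1}$ and $\binom{X\setminus\{x^*\}}{k-1}$, which has size at most $2\binom{|X|-2}{k-2}$.

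Next, combine the uniform bound on $d_{H[X]}(x)$ with a bound on the total degree of $x$. In Case~1 use the hypothesis $\delta_1(H)\ge(\tfrac12-\a)\binom{n-1}{k-1}$: the $(k-1)$-neighbors of $x$ not lying in $\binom{X\setminus\{x\}}{k-1}$ number at least $d_H(x)-d_{H[X]}(x)$ yet cannot exceed $\binom{n-1}{k-1}-\binom{|X|-1}{k-1}$. In Case~2 use simply $d_{H[X]}(x)\le d_H(x)\le(\tfrac12+\a)\binom{n-1}{k-1}+\g n^{k-1}$ from Claim~\ref{clm:vv'}(i). Either route rearranges to the same inequality
\[
(1-\g_0)\binom{|X|-1}{k-1}\le\left(\tfrac12+\a\right)\binom{n-1}{k-1}+O(\g n^{k-1}).
\]

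The remaining work is algebraic. Since $\a\ll\g\ll\g_0\le 1/4$ and $(1-\g_0)(\tfrac12+2\g_0)=\tfrac12+\tfrac{3\g_0}{2}-2\g_0^2\ge\tfrac12+\g_0$, the right-hand side is at most $(1-\g_0)(\tfrac12+2\g_0)\binom{n-1}{k-1}$, whence $\binom{|X|-1}{k-1}\le(\tfrac12+2\g_0)\binom{n-1}{k-1}$. Converting binomial coefficients via $\binom{m}{k-1}=(1+o(1))m^{k-1}/(k-1)!$ and taking $(k-1)$-th roots gives the desired $|X|\le(\tfrac12+2\g_0)^{1/(k-1)}n+o(n)$. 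The main obstacle is verifying that the hierarchy $\a\ll\g\ll\g_0\ll 1/k$ supplies enough slack to absorb both the $O(\g n^{k-1})$ error and the $o(n)$ loss from the root extraction, uniformly in $k$; this is manageable because the defining relation $\g_0(1-\g_0)/(k+1)!=5\g\cdot 3^{k-1}$ forces $\g_0\ge 5\g(k+1)!\,3^{k-1}$, which far outweighs any $(k-1)!\,\g$ term appearing in the calculation.
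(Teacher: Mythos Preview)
Your proof is correct and follows essentially the same route as the paper: use the standing assumption $|X|\ge(1-\g)n/2$ for the $X$-bound, and for the $Y$-bound obtain a witness $x\in X$ by averaging in each case, combine with the minimum-degree hypothesis (Case~1) or Claim~\ref{clm:vv'}(i) (Case~2) to get $(1-\g_0)\binom{|X|-1}{k-1}\le(\tfrac12+\a)\binom{n-1}{k-1}+O(\g n^{k-1})$, and finish by the same arithmetic. Two cosmetic points: the detour through Claim~\ref{clm:vv'}(iii) to make the witness bound uniform over $X$ is unnecessary (a single witness suffices, as in the paper), and the paper absorbs the lower-order terms directly to get $|X|\le(\tfrac12+2\g_0)^{1/(k-1)}n$ exactly rather than carrying an $o(n)$ and arguing separately that it can be absorbed.
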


\begin{proof} The bound on $|X|$ follows since $|X|\geq (1-\gamma)n/2$.
Since $|X|+ |Y| + |V_0| = n$ and $|V_0| \le \g n$, to prove the bound on $|Y|$, it suffices to show that $|X|\leq (\frac12 + 2\g_0)^{\frac{1}{k-1}} n$. In Case 1, there exists a vertex $x\in X$ such that $d_{H[X]}(x) \le \g_0 \binom{|X| -1}{k-1}$ and consequently, $|\overline{N}_{H[X]}(x)| \ge (1- \g_0) \binom{|X| -1}{k-1}$. Together with the minimum degree condition, this gives
\[
\left (\frac12 - \a \right ) \binom{n-1}{k-1} \le d_{H}(x) \le \binom{n-1}{k-1} - (1- \g_0) \binom{|X| -1}{k-1}.
\]
In Case 2, there exists a vertex $x\in X$ such that $d_{H[X]}(x) \ge (1- \g_0) \binom{|X| -1}{k-1}$. By Claim~\ref{clm:vv'} (i),
\[
(1- \g_0) \binom{|X| -1}{k-1} \le d_{H[X]}(x) \le \left(\frac12 + \a \right) \binom{n-1}{k-1} + \g n^{k-1}.
\]
In either case we have $(1- \g_0) \binom{|X| -1}{k-1} \le (\frac12 + \a) \binom{n-1}{k-1} + \g n^{k-1}$, which implies that $\binom{|X| -1}{k-1} \le (1+ 2\g_0)((1/2 + \a) \binom{n-1}{k-1} + \g n^{k-1})$. Letting $|X| = c n$, it follows that
\[
c^{k-1} \binom{n-1}{k-1} - O(n^{k-2}) \le \left(\frac12 + \g_0 + 2\a \right) \binom{n-1}{k-1}+ 2\g n^{k-1}
\]
Since $\g n^{k-1} \le \frac{\g_0}{5} \binom{n-1}{k-1}$, we conclude that $c\le (\frac12 + 2\g_0)^{\frac{1}{k-1}}$.
\end{proof}

Given two disjoint subsets $A, B\subset V$ and two integers $i, j\ge 0$, we call an $(i+j)$-set $S\subseteq V$ an $A^i B^j$-set if $|S\cap A|=i$ and $|S\cap B|=j$, and let $A^i B^j$ denote the family of all $A^i B^j$-sets.
Let $c_0 := 1 - (\frac12 + 2\g _0)^{\frac{1}{k-1}} - \g $, and $\g_i := \g_{i-1} + 5\g k!/ c_0^{k-1}$ for $i=1, \dots, k$.

\begin{claim} \label{clm:dXY}
\begin{enumerate}[{\rm (i)}]
\item In Case 1, for $1\le i\le k$, for any $y\in Y$, at least $(1- \g_i) \binom{|X|}{k-i} \binom{|Y|}{i-1}$ $X^{k-i} Y^{i-1}$-sets are neighbors (respectively, non-neighbors) of $y$ if $i$ is odd (respectively, even).
\item In Case 2, for $1\le i\le k$, for any $y\in Y$, at least $(1- \g_i) \binom{|X|}{k-i} \binom{|Y|}{i-1}$ $X^{k-i} Y^{i-1}$-sets are neighbors (respectively, non-neighbors) of $y$ if $i$ is even (respectively, odd).
\end{enumerate}
\end{claim}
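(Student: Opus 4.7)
The plan is to prove both parts of Claim~\ref{clm:dXY} by induction on $i$, handling Case 1 explicitly since Case 2 is obtained by swapping $N_H$ and $\overline{N}_H$ in each application of Claim~\ref{clm:vv'}(iv). For $y\in Y$ set $\alpha_i(y):=|N_H(y)\cap X^{k-i}Y^{i-1}|$ (implicitly $y\notin T$), let $\overline{\alpha}_i(y):=\binom{|X|}{k-i}\binom{|Y|-1}{i-1}-\alpha_i(y)$, and let $e_i$ denote the number of $k$-edges of $H$ of type $X^{k-i}Y^i$. Two identities do the bulk of the work. First, $\sum_{y\in Y}\alpha_i(y)=i\cdot e_i$ converts the per-vertex statement at level $i$ into a global bound on $e_i$. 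Second, for $T\in X^{k-i-1}Y^i$ the count $g_X(T):=|\{x\in X\setminus T:\{x\}\cup T\in E(H)\}|$ of $X^{k-i}Y^i$-edges extending $T$ via an $X$-vertex satisfies $\sum_T g_X(T)=(k-i)e_i$, so a bound on $e_i$ translates into an average bound on the deficit $(|X|-k+i+1)-g_X(T)$.

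For the base case $i=1$ in Case 1, apply Claim~\ref{clm:vv'}(iv) in the form $\sum_{x\in X}|\overline{N}_H(x)\cap\overline{N}_H(y)|\le 4\gamma|X|n^{k-1}$, restricting the inner family to $T\in\binom{X}{k-1}\cap\overline{N}_H(y)$. The left side rewrites as $\sum_T((|X|-k+1)-g_X(T))$, while the Case 1 hypothesis $e(H[X])\le\gamma_0\binom{|X|}{k}$ gives $\sum_{T\in\binom{X}{k-1}} g_X(T)=k\cdot e(H[X])\le\gamma_0\binom{|X|}{k-1}(|X|-k+1)$; combining and dividing by $|X|-k+1$ yields $\overline{\alpha}_1(y)\le\gamma_0\binom{|X|}{k-1}+4\gamma|X|n^{k-1}/(|X|-k+1)\le\gamma_1\binom{|X|}{k-1}$, using $|X|\ge c_0 n$ from Claim~\ref{clm:XY} and the definition of $\gamma_1$.

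For the inductive step $i\to i+1$ in Case 1 with $i$ odd, the IH gives $\alpha_i(y)\ge(1-\gamma_i)\binom{|X|}{k-i}\binom{|Y|-1}{i-1}$ for every $y$; summing over $Y$ yields $e_i\ge(1-\gamma_i)\binom{|X|}{k-i}\binom{|Y|}{i}$, and consequently the total deficit $\sum_{T\in X^{k-i-1}Y^i}((|X|-k+i+1)-g_X(T))$ is at most $\gamma_i\binom{|X|}{k-i-1}\binom{|Y|}{i}(|X|-k+i+1)$. Applying Claim~\ref{clm:vv'}(iv) now in the form $\sum_{x\in X}|N_H(x)\cap N_H(y)|\le 4\gamma|X|n^{k-1}$, restricted to $T\in N_H(y)\cap X^{k-i-1}Y^i$, bounds $\sum_{T\in N_H(y)}g_X(T)\le 4\gamma|X|n^{k-1}$; combining with the deficit bound gives $\alpha_{i+1}(y)(|X|-k+i+1)\le\gamma_i\binom{|X|}{k-i-1}\binom{|Y|}{i}(|X|-k+i+1)+4\gamma|X|n^{k-1}$, whence $\alpha_{i+1}(y)\le\gamma_{i+1}\binom{|X|}{k-i-1}\binom{|Y|}{i}$ after dividing. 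When $i$ is even the IH instead gives $e_i$ small, so $g_X(T)$ is small on average, and one bounds $\overline{\alpha}_{i+1}(y)$ from above symmetrically using $\sum_x|\overline{N}_H(x)\cap\overline{N}_H(y)|\le 4\gamma|X|n^{k-1}$.

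The main obstacle is the quantitative bookkeeping: at every level $1\le i\le k-1$, the increment $\gamma_{i+1}-\gamma_i=5\gamma k!/c_0^{k-1}$ must absorb the additive error $4\gamma|X|n^{k-1}/(|X|-k+i+1)\approx 4\gamma n^{k-1}$ coming from (iv). This works because Claim~\ref{clm:XY} gives $|X|,|Y|\ge c_0 n$, so $\binom{|X|}{k-i-1}\binom{|Y|}{i}\ge c_0^{k-1}n^{k-1}/(k-1)!$ up to lower-order terms for every $1\le i\le k-1$, and the factor $k!/c_0^{k-1}$ in the increment is precisely what is needed to dominate this error uniformly in $i$.
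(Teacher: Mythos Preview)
Your proof is correct and follows the same inductive skeleton as the paper: induct on $i$, and at each step use Claim~\ref{clm:vv'}(iv) to flip between ``most $X^{k-i}Y^{i-1}$-sets are neighbors of $y$'' and ``most $X^{k-i-1}Y^{i}$-sets are non-neighbors of $y$'' (and vice versa). The one implementation difference is that the paper, at each step, uses averaging to locate a single vertex $x_i\in X$ with a near-extremal count of $X^{k-i-1}Y^i$-neighbors and then applies (iv) once to the pair $(x_i,y)$; you instead sum the bound in (iv) over all $x\in X$ and combine it with the double-counting identities $\sum_{y}\alpha_i(y)=i\,e_i$ and $\sum_T g_X(T)=(k-i)e_i$. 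Both routes incur the same additive error of order $4\gamma n^{k-1}$ per step, which is exactly what the increment $\gamma_{i+1}-\gamma_i=5\gamma k!/c_0^{k-1}$ is designed to absorb via \eqref{eq:XYi}, so the quantitative bookkeeping matches. Your averaging-free version is arguably slightly cleaner, since it avoids singling out a vertex, but neither approach offers a real advantage over the other.
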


\begin{proof} We prove both cases by induction on $i$. In Case 1 there exists a vertex $x_1\in X$ such that $d_{H[X]}(x_1) \le \g_0 \binom{|X| -1}{k-1}$ and consequently $|\overline{N}_H(x_1)\cap \binom{X}{k-1}| \ge (1 - \g_0) \binom{|X| -1}{k-1}$.
Fix a vertex $y\in Y$. By Claim~\ref{clm:vv'} (iv), $|\overline{N}_H(x_1) \cap \overline{N}_H(y)| \le 4\g n^{k-1}$. Thus at least $(1 - \g_0) \binom{|X| -1}{k-1} - 4\g n^{k-1}$ $X^{k-1}$-sets are neighbors of $y$. By Claim~\ref{clm:XY}, $|X|, |Y|\ge c_0 n$. Then for any $0\le i\le k-1$,
\begin{equation}\label{eq:XYi}
    \binom{|X|}{k-i-1} \binom{|Y|}{i} \ge \frac{(c_0 n)^{k-1}}{(k-1)!} - O(n^{k-2}) \ge \frac{c_0^{k-1} n^{k-1}}{k!}.
\end{equation}
Together with the definition of $\g_1$, we conclude that at least
\[
 (1 - \g_0) \binom{|X| -1}{k-1} - 4\g n^{k-1} \ge (1 - \g_0) \binom{|X|}{k-1} - O(n^{k-2}) - \frac{4\g k!}{c_0^{k-1}} \binom{|X|}{k-1} \ge (1 - \g_1) \binom{|X|}{k-1}
\]
$X^{k-1}$-sets are neighbors of $y$. This confirms (i) for $i=1$. In Case 2,  by averaging, there exists a vertex $x_1\in X$ such that $d_{H[X]}(x_1) \ge (1- \g_0) \binom{|X| -1}{k-1}$.  Fix a vertex $y\in Y$. By Claim~\ref{clm:vv'} (iv), $| N_H(x_1) \cap N_H(y)| \le 4\g n^{k-1}$. Thus at least
$(1 - \g_0) \binom{|X| -1}{k-1} - 4\g n^{k-1} \ge (1 - \g_1) \binom{|X|}{k-1}$
$X^{k-1}$-sets are non-neighbors of $y$. This confirms (ii) for $i=1$.

For the induction step, we first assume that for some $1\le i\le k$, every $y\in Y$ has at least $(1- \g_i) \binom{|X|}{k-i} \binom{|Y|}{i-1}$ $X^{k-i} Y^{i-1}$-sets in its neighborhood. Consequently at least $(1- \g_i) \binom{|X|}{k-i} \binom{|Y|}{i}$  $X^{k-i} Y^{i}$-sets are edges of $H$. By averaging, there exists $x_i\in X$ whose neighborhood contains at least $(1- \g_i) \binom{|X| - 1}{k- i-1} \binom{|Y|}{i}$ $X^{k-i-1} Y^{i}$-sets. Fix $y\in Y$. By Claim~\ref{clm:vv'} (iv),
\[
|(N_H(x_i)\cap X^{k-i-1} Y^i) \setminus N_H(y)|\ge (1 - \g_i) \binom{|X| - 1}{k- i- 1} \binom{|Y|}{i} - 4\g n^{k-1} .
\]
Since $|\overline{N}_H(y)\cap X^{k-i-1} Y^i| \ge |(N_H(x_i)\cap X^{k-i-1} Y^i) \setminus N_H(y)| - O(n^{k-2})$, we conclude that at least
\[
(1 - \g_i) \binom{|X| - 1}{k- i- 1} \binom{|Y|}{i} - 4\g n^{k-1} - O(n^{k-2})\ge (1 - \g_{i+1}) \binom{|X|}{k-i -1} \binom{|Y|}{i} 
\]
$X^{k-i-1} Y^{i}$-sets are non-neighbors of $y$, where we use \eqref{eq:XYi} and the definition of $\g_{i+1}$.
Analogously we can show that if for some $1\le i\le k$, every $y\in Y$ has at least $(1- \g_i) \binom{|X|}{k-i} \binom{|Y|}{i-1}$ $X^{k-i} Y^{i-1}$-sets as  non-neighbors, then at least $(1 - \g_{i+1}) \binom{|X|}{k-i -1} \binom{|Y|}{i}$ $X^{k-i-1} Y^{i}$-sets are neighbors of $y$.
This completes our induction proof.
\end{proof}

\begin{claim}\label{clm:n2}
$|X|, |Y| \ge (1- \eta) n/2$, where $\eta :=  (2\g_k)^{1/(k-1)} + \g$.
\end{claim}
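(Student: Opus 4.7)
The plan is to pin down $d_H(y)$ for any $y\in Y$ as an explicit polynomial in $|X|,|Y|$ via Claim~\ref{clm:dXY} and Proposition~\ref{evensum}, and then use the minimum degree hypothesis together with Claim~\ref{clm:vv'}(i) to force that polynomial to lie near $\tfrac12\binom{n-1}{k-1}$; this will show that $|X|-|Y|$ has absolute value at most $(2\gamma_k)^{1/(k-1)} n+O(\gamma n)$. Set $s:=|X|+|Y|$ and $t:=|X|-|Y|$. Since $|V_0|\le \gamma n$ we already have $s\ge(1-\gamma)n$, so it suffices to bound $|t|$.

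Fix $y\in Y$. In Case 1, Claim~\ref{clm:dXY}(i) says that for every $i\in\{1,\dots,k\}$, at least $(1-\gamma_k)\binom{|X|}{k-i}\binom{|Y|}{i-1}$ of the $X^{k-i}Y^{i-1}$-sets are neighbors of $y$ if $i$ is odd and non-neighbors if $i$ is even. Since the neighbors of $y$ using a vertex of $V_0$ number at most $|V_0|\binom{n-2}{k-2}=O(\gamma n^{k-1})$, we conclude that, up to an additive error $O(\gamma_k n^{k-1})$,
\[
 d_H(y)\ =\ \sum_{i\text{ odd}}\binom{|X|}{k-i}\binom{|Y|}{i-1}.
\]
Setting $j:=i-1$ and applying Proposition~\ref{evensum} with ground set $X\cup Y$ of size $s$ and $r=k-1$ (so that $2|X|/s-1=t/s$), the right-hand side equals $(s^{k-1}+t^{k-1})/(2(k-1)!)-O(n^{k-2})$. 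Combining with
\[
 \left(\tfrac12-\alpha\right)\binom{n-1}{k-1}\ \le\ d_H(y)\ \le\ \left(\tfrac12+\alpha\right)\binom{n-1}{k-1}+\gamma n^{k-1}
\]
(from the minimum degree hypothesis and Claim~\ref{clm:vv'}(i)) yields
\[
 s^{k-1}+t^{k-1}\ =\ n^{k-1}\pm O(\gamma_k n^{k-1}).
\]
In Case 2, the analogous computation with the even-$i$ sum gives $s^{k-1}-t^{k-1}=n^{k-1}\pm O(\gamma_k n^{k-1})$.

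In either case, using $s^{k-1}\in[(1-(k-1)\gamma)n^{k-1},\,n^{k-1}]$, we conclude $|t^{k-1}|\le(2\gamma_k+O(\gamma))n^{k-1}$; taking $(k-1)$-th roots then gives $|t|\le(2\gamma_k)^{1/(k-1)}n+O(\gamma)n$. Combining with $s\ge(1-\gamma)n$,
\[
 \min\{|X|,|Y|\}\ =\ \frac{s-|t|}{2}\ \ge\ (1-\eta)\frac{n}{2},
\]
after absorbing the $O(\gamma)n$ slack into the $\gamma$-summand of $\eta$ (which is valid since $\gamma_k>\gamma$, hence $(2\gamma_k)^{1/(k-1)}\gg\gamma$). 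The main technical point is the parity split in the last step: when $k-1$ is even, $t^{k-1}\ge 0$ automatically and a single appropriate bound on $d_H(y)$ is enough to control $|t|$; when $k-1$ is odd, $t^{k-1}$ can have either sign, so both the upper and the lower bound on $d_H(y)$ are genuinely required in order to sandwich $t^{k-1}$ from above and below. Everything else is an application of Claims~\ref{clm:dXY} and~\ref{clm:vv'} together with Proposition~\ref{evensum}.
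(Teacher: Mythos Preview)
Your approach is essentially the same as the paper's: both use Claim~\ref{clm:dXY} to approximate $d_H(y)$, invoke Proposition~\ref{evensum} to express this in terms of $|X|$ and $|Y|$, and then compare with the two-sided degree bounds coming from the minimum-degree hypothesis and Claim~\ref{clm:vv'}(i). Your $(s,t)$ parametrization is equivalent to the paper's $c=|X|/\tilde n$ (since $t/s=2c-1$), and your simultaneous use of both degree bounds is a mild streamlining of the paper's case split (the paper instead argues by contradiction from one side and, when $k-1$ is odd and $c<1/2$, switches to bounding $\overline d_H(y)$). The only caveat is that your big-$O$ bookkeeping does not actually pin down the constant $2$ in $(2\gamma_k)^{1/(k-1)}$: to recover the exact $\eta$ stated in the claim you would need to track the error as the paper does, showing that $(2c-1)^{k-1}\ge 2\gamma_k$ forces $d_H(y)>(1/2+\alpha)\binom{n-1}{k-1}+\gamma n^{k-1}$ via the inequality $(1-\gamma_k)(1-\gamma)^{k-1}(1+2\gamma_k)\ge 1+\gamma_k/2$.
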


\begin{proof}
Suppose that Claim~\ref{clm:dXY}  (i) holds (the proof when  Claim~\ref{clm:dXY}  (ii) holds is analogous).

Let $\tilde{n} := |X\cup Y|$. Note that $\tilde{n} \ge (1- \gamma)n$ because $|V_0| \leq \gamma n$.
Let $c:= |X| / \tilde{n}$. It suffices to show that $(1 - (2\g_k)^{1/(k-1)})/2 \le c\le (1 + (2\g_k)^{1/(k-1)})/2 $ because this implies that
\[
|X| = c\tilde{n}\ge \frac12 \big(1 - (2\g_k)^{1/(k-1)} \big) (1- \g)n > (1 - \eta) \frac{n}{2}
\]
and $|Y| = (1-c)\tilde{n} \ge \frac12 (1 - (2\g_k)^{1/(k-1)}) (1- \g)n > (1 - \eta) {n}/{2}$.

For any $y\in Y$, by Claim~\ref{clm:dXY} (i),
\[
d_H(y) \ge \sum_{1\le i\le k, \, i \,\rm{odd} } (1- \g_i) \binom{|X| }{k-i} \binom{|Y|}{i-1} \ge (1-\g_k) \sum_{0\le j\le k-1, \, j \,\rm{even} } \binom{|X|}{k-1-j} \binom{|Y|}{j} .
\]
Hence, by Proposition~\ref{evensum}, ${d}_H(y) \ge  (1- \g_k) \frac{\tilde{n}^{k-1}}{2(k-1)!} ( 1+ (2c-1)^{k-1} ) -O(n^{k-2})$. If $(2c-1)^{k-1} \ge 2\g_k$, then
\[
d_H(y) \ge (1- \g_k)\frac{(1- \g)^{k-1} n^{k-1}}{2(k-1)!} (1+ 2\g_k) - O(n^{k-2}) \ge \left (1 + \frac{\g_k}2\right ) \frac{n^{k-1}}{2(k-1)!},
\]
as $5\g(k-1)! < \g_k \ll 1$.
This contradicts  Claim~\ref{clm:vv'} (i).
Thus $(2c-1)^{k-1} < 2\g_k$. If $c\ge 1/2$, then $c< (1+ (2\g_k)^{1/(k-1)})/2$; if $c< 1/2$ and $k-1$ is even, then $(1-2c)^{k-1} = (2c- 1)^{k-1} < 2\g_k$ and thus $c> (1 - (2\g _k)^{1/{(k-1)}})/2$. In either case we are done. Otherwise assume that $c< 1/2$ and $k-1$ is odd. By Claim~\ref{clm:dXY} (i),
\[
\overline{d}_H(y) \ge \sum_{1\le i\le k, \, i \,\rm{even} } (1- \g_i) \binom{|X| }{k-i} \binom{|Y|}{i-1} \ge (1-\g_k) \sum_{0\le j\le k-1, \, j \,\rm{odd} } \binom{|X|}{k-1-j} \binom{|Y|}{j},
\]
where $\overline{d}_H(y):= |\overline{N} _H (y)|$.
By Proposition~\ref{evensum}, we have
$\overline{d}_H(y) \ge (1- \g_k) \frac{\tilde{n}^{k-1}}{2(k-1)!} ( 1- (2c-1)^{k-1} ) -O(n^{k-2})$.
If $1- (2c-1)^{k-1} = 1 + (1-2c)^{k-1} \ge 1 + 2\g_k$, then we obtain a contradiction as before  because $ \overline{d}_H(y) \le(\frac12 + \a)\binom{n-1}{k-1}$. Hence, $(1- 2c)^{k-1} < 2\g_k$ and consequently $c> (1 - (2\g_k)^{1/(k-1)})/2$, as desired.
\end{proof}

By Claim~\ref{clm:n2}, there exists a partition $ X', Y'$ of $V$ such that $|X'|= \lceil n/2 \rceil$, $|Y' | =  \lfloor n/2 \rfloor$ and $| X\cap X'|, |Y\cap Y'| \ge (1-\eta) n/2$. We claim that $H$ is $\eps$-close to $\dB(Y', X')$ in Case~1 and $\eps$-close to $\doB(Y', X')$ in Case~2.
Indeed, set $\B := \B_{\tilde{n}, k}(Y, X)$, where $\tilde{n} := |X\cup Y|$, and $H' := H[X\cup Y]$.
By definition, $E(\B)$ consists of all $X^{k-i} Y^{i}$-sets for all odd $0\le i\le k$. If Claim~\ref{clm:dXY} (i) holds, then $|E(\B)\cap E(H')| \ge (1- \g_k) |E(\B)| $ and $|E(\oB)\cap E(\overline{H'})| \ge (1- \g_k) |E(\oB)| $. Thus
\[
| E(\B) \triangle E(H') | =  | E(\B)\setminus E(H')| + | E(\oB)\setminus E(\overline{H'})| \le \g_k |E(\B)|  + \g_k |E(\oB)|\le \g_k \binom{n}{k}.
\]

Let $V' := (X\cap X')\cup (Y\cap Y')$. Then $|V'|\ge (1- \eta)n$ and
\[
\left| \big(E(\mathcal B_{n,k}(Y', X')) \triangle E(H) \big) \setminus \binom{V'}{k} \right| \le \eta n \binom{n-1}{k-1}.
\]
Since $\gamma \ll \eps ,1/k$, we have that $\g _k , \eta \ll \eps$.
Therefore,
\[
| E(\mathcal B_{n,k}(Y', X')) \triangle E(H)| \le  {\eta} n \binom{n-1}{k-1} + | E(\B) \triangle E(H') | \le {\eta} n \binom{n-1}{k-1} + \g_k \binom{n}{k}\le \eps \binom{n}{k}.
\]
which implies that $H$ is $\eps$-close to $\mathcal B_{n,k}(Y', X')$.
Analogously we can show that $H$ is $\eps$-close to $\overline{\mathcal B}_{n,k}(Y', X')$ in Case~2.
This completes the proof of Lemma~\ref{lem:main}.
\endproof

\section{An application of Lemma~\ref{absorbing}}
The following simple application of Lemma~\ref{absorbing} implies that the minimum $\ell$-degree condition that forces a perfect fractional matching also forces a perfect matching in a $k$-uniform hypergraph $H$, if we additionally assume that $H$ has a small number of vertices of large degree.

\begin{thm}\label{conc}
Given any $0<\eps \le \delta'$ and $k, \ell \in \mathbb N$ where $\ell<k$, there is an $n_0 \in \mathbb N$ such that the following holds. Let $H$ be a $k$-uniform hypergraph on $n\geq n_0$ vertices where $k$ divides $n$. Suppose that $\delta _{1} (H)\geq \delta' \binom{n-1}{k-1}$ and $\delta_{\ell}(H) \ge (c^*_{k, \ell} + \eps) \binom{n-\ell}{k-\ell}$. If there are at least $\eps n$ vertices $x \in V(H)$ so that  $d_H (x) \geq (1-\delta '+\eps)\binom{n-1}{k-1}$ then $H$ contains a perfect matching.
\end{thm}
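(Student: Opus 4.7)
The plan is to mirror the structure of the proof of Theorem~\ref{main2} in the non-extremal case: find a small absorbing matching via Lemmas~\ref{absorbing} and~\ref{lo}, then cover almost all remaining vertices using a fractional matching and Lemma~\ref{lem:frac}, and finally absorb the leftover vertices. The extremal analysis from Theorem~\ref{thm:abs} is not needed here, because the minimum $1$-degree assumption $\delta_1(H) \geq \delta' \binom{n-1}{k-1}$ is not centered around $\frac{1}{2}\binom{n-1}{k-1}$; the set $Z$ of $\eps n$ high-degree vertices will take the place of the structural information used there.

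First I would introduce a hierarchy $0<\alpha \ll \gamma' \ll \gamma \ll \eps \le \delta', 1/k$ and set $\xi := (\gamma'/2)^k / \sqrt{128k(k-1)^2}$. The first key step is to verify the hypothesis of Lemma~\ref{absorbing}. Let $Z := \{x \in V(H) : d_H(x) \geq (1-\delta' + \eps)\binom{n-1}{k-1}\}$, so $|Z| \geq \eps n \geq \gamma n$. For arbitrary $x, y \in V(H)$ and every $z \in Z$, inclusion--exclusion on $(k-1)$-subsets of $V(H)\setminus\{x,z\}$ (respectively $V(H)\setminus\{y,z\}$) gives
\[
|N_H(x)\cap N_H(z)| \;\geq\; d_H(x) + d_H(z) - \binom{n-1}{k-1} \;\geq\; \eps \binom{n-1}{k-1} \;\geq\; \gamma n^{k-1},
\]
and similarly $|N_H(y)\cap N_H(z)| \geq \gamma n^{k-1}$. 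Hence condition (ii) of Lemma~\ref{absorbing} holds for every pair $x,y$, so there are at least $\gamma' n^{2k-1}$ $(2k-1)$-sets $X$ such that both $H[X\cup\{x\}]$ and $H[X\cup\{y\}]$ contain perfect matchings. Applying Lemma~\ref{lo} then yields an absorbing matching $M$ with $|M| \leq \xi n/k$ that absorbs any $W \subseteq V(H)\setminus V(M)$ with $|W|\in k\mathbb{N}$ and $|W| \leq \xi^2 n$.

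Next I would set $H' := H[V(H)\setminus V(M)]$ and $n_1 := |V(H')|$. Since $|V(M)|\leq \xi n$ and $\xi \ll \eps, 1/k$, the $\ell$-degree condition passes to $H'$:
\[
\delta_\ell(H') \;\geq\; \delta_\ell(H) - |V(M)|\binom{n-\ell-1}{k-\ell-1} \;\geq\; \bigl(c^*_{k,\ell} + \eps/2\bigr)\binom{n_1-\ell}{k-\ell} \;>\; \Bigl(\frac{c^*_{k,\ell}}{(k-\ell)!} + 2\xi^2\Bigr) n_1^{k-\ell}.
\]
By the definition of $c^*_{k,\ell}$, every sufficiently large $k$-uniform hypergraph $F$ on $\tilde n$ vertices with $\delta_\ell(F)\geq (c^*_{k,\ell}/(k-\ell)! + \xi^2)\tilde n^{k-\ell}$ contains a perfect fractional matching of size $\tilde n/k$. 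Applying Lemma~\ref{lem:frac} with $\xi^2/k$ and $(1-\xi^2)/k$ playing the roles of $\eps$ and $b$ yields an integer matching $M'$ in $H'$ of size at least $(1-\xi^2)n_1/k$. The set $W$ of uncovered vertices satisfies $|W|\in k\mathbb N$ and $|W|\leq \xi^2 n$, so $M$ absorbs $W$, producing a perfect matching in $H$.

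The main obstacle is essentially the first step: making sure that the combination of the $1$-degree hypothesis and the existence of $\eps n$ high-degree vertices forces condition (ii) of Lemma~\ref{absorbing} uniformly over all pairs $x,y$. The inclusion--exclusion argument above handles this cleanly because $d_H(x)+d_H(z)$ for $z\in Z$ already exceeds $\binom{n-1}{k-1}$ by a constant fraction; the rest of the argument is then a direct adaptation of the non-extremal portion of the proof of Theorem~\ref{main2}.
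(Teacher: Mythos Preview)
Your proposal is correct and follows essentially the same route as the paper's sketch proof: verify condition~(ii) of Lemma~\ref{absorbing} via the set $Z$ of high-degree vertices, apply Lemma~\ref{lo} to get a small absorbing matching, use Lemma~\ref{lem:frac} to cover almost all of $H'$, and absorb the remainder. One minor technical point: your inclusion--exclusion bound $|N_H(x)\cap N_H(z)| \geq d_H(x)+d_H(z)-\binom{n-1}{k-1}$ is off by an $O(n^{k-2})$ term (since $N_H(x)$ may contain $(k-1)$-sets through $z$ and vice versa), but this is harmless given your hierarchy $\gamma \ll \eps$.
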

\noindent
\textbf{Sketch proof.}
 It is easy to see that $H$ satisfies  Lemma~\ref{absorbing} (ii) (where we choose $0<\gamma \ll \eps$) and so by Lemma~\ref{lo}, $H$ contains a small absorbing matching $M$. Let $H':=H\setminus V(M)$. Then
$\delta _{\ell}(H') \geq (c^*_{k, \ell} + \eps/2) \binom{n-\ell}{k-\ell}$ and so by Lemma~\ref{lem:frac}, $H'$ contains a matching covering all but a very small set of vertices. After absorbing the uncovered vertices
 by $M$, we obtain a perfect matching in $H$.
\endproof

\section*{Acknowledgements}
This research was partially carried out whilst the  authors were visiting the Institute for Mathematics and its Applications at the University of Minnesota.
The authors would like to thank the institute for the nice working environment. The authors are also grateful
to  the referees for their  careful reviews.

{\footnotesize \obeylines \parindent=0pt

\begin{tabular}{lll}
Andrew Treglown                     &\ &  Yi Zhao \\
School of Mathematics						    &\ &  Department of Mathematics and Statistics \\
University of Birmingham   					&\ &  Georgia State University \\
Birmingham                          &\ &  Atlanta \\
B15 2TT															&\ &  Georgia 30303\\
UK																	&\ &  USA
\end{tabular}
}

{\footnotesize \parindent=0pt

\it{E-mail addresses}:
\tt{a.c.treglown@bham.ac.uk}, \tt{yzhao6@gsu.edu}}
\end{document}